                    		                          %
\documentclass[12pt,reqno]{amsart}

\usepackage{amsmath}%
\usepackage{amsfonts}%
\usepackage{amssymb}%
\usepackage{graphicx}
\usepackage{srcltx}%
\usepackage{dsfont}
\usepackage[latin1]{inputenc}
\usepackage{graphicx}
\usepackage{color}

%

\setlength{\oddsidemargin}{0.3cm}
\setlength{\evensidemargin}{0.3cm}
\setlength{\textwidth}{16cm}

\newtheorem{theorem}{Theorem}
\theoremstyle{plain}

\newtheorem{lemma}{Lemma}[section]

\newtheorem{proposition}{Proposition}[section]
\newtheorem{remark}{Remark}[section]

\numberwithin{equation}{section}

\newcommand{\E}{\mathbb{E}}
\newcommand{\N}{\mathbb{N}}
\newcommand{\R}{\mathbb{R}}

\newcommand{\pp}{\mathbb{P}}
\newcommand{\V}{{\mathcal{V}}}

\newcommand{\qw}{\begin{equation}}
\newcommand{\qwe}{\end{equation}}
\newcommand{\qww}{\begin{equation*}}
\newcommand{\qwee}{\end{equation*}}

\newcommand{\zn}{{\zeta_n(T)}}

\newcommand{\inc}[1]{\mathds{1}\left\{{#1}\right\}}

\newcommand{\bn}{\beta_n}
\newcommand{\bnn}{\beta_n^2}
\newcommand{\sqn}{\sqrt{n}}
\newcommand{\pin}{\sqrt{2\pi n}}

\newcommand{\tem}{T_{\epsilon}^{M}(n)}
\newcommand{\te}{T^{\epsilon}(n)}
\newcommand{\tm}{T_M(n)}

\newcommand{\ep}{\epsilon}

\newcommand{\an}{{\alpha_n}}

\newcommand{\vn}{{\mathcal{V}_n}}
\newcommand{\en}{\mathcal{E}_n}
\newcommand{\taun}{{\tau_x}}
\newcommand{\btaun}{\boldsymbol{\tau}_{n}}
\newcommand{\btau}{\boldsymbol{\tau}}
\newcommand{\yn}[1]{{Y_n({#1})}}
\newcommand{\yi}{Y_n(i)}
\newcommand{\rr}{\mathcal{R}}

\newcommand{\hideownbib}[1]{} 

\hideownbib{\newcites{own}{[a] Refereed}}
\bibliographystyle{alpha}
\hideownbib{\bibliographystyleown{plain}}

\begin{document}
\title[Extremal Aging For Trap Models]{Extremal Aging For Trap Models}
\author{Onur G\"{u}n}

\address
{Onur G\"{u}n, Weierstrass Institute Berlin\newline
\indent 39 Mohrenstrasse\newline
\indent 10117 Berlin, GERMANY}%
\email{guen@wias-berlin.de}%

\date{4 December 2013}%
\subjclass{82C44; 82D30; 60G70.}
\keywords{Random walk, random environment, REM, dynamics of spin glasses, aging, extremal processes.}%

\begin{abstract}
In the seminal work \cite{BC08}, Ben Arous and \v{C}ern\'y give a general characterization of aging for trap models in terms of $\alpha$-stable subordinators with $\alpha \in (0,1)$. ~Some of the important examples that fall into this universality class are Random Hopping Time (RHT) dynamics of Random Energy Model (REM) and $p$-spin models observed on exponential time scales. In this paper, we explain a different aging mechanism in terms of {\it extremal processes} that can be seen as the extension of $\alpha$-stable aging to the case $\alpha=0$. We apply this mechanism to the RHT dynamics of the REM for a wide range of temperature and time scales. The other examples that exhibit extremal aging include the Sherrington Kirkpatrick (SK) model and $p$-spin models \cite{BG12,BGS13}, and biased random walk on critical Galton-Watson trees conditioned to survive \cite{CFK13}.
\end{abstract}
\maketitle

\section{Introduction}

A striking feature common to many disordered systems is that in an out-of-equilibrium phase convergence to equilibrium is very slow, and it gets slower as the system gets `older'. This phenomena is usually called {\it aging} and has been studied extensively in physics community, both experimentally and theoretically. {\it Spin glasses} constitute an important class of disordered systems where aging occurs, and the dynamics of mean field spin glasses is one of the focal points of this article.

\newcommand{\G}{\mathcal{G}}

On the theoretical side, {\it trap models} have become the central paradigm to study aging properties of disordered systems. 
They are simple enough to track analytically and many examples exhibit aging that can be established in a mathematically 
rigorous way. The general description of trap models is the following. Let $\G=(\V,\mathcal{E})$ be a countable, connected graph where 
$\V$ and $\mathcal{E}$ denote the set of vertices and edges, respectively. 
Consider a random collection of positive numbers $\btau=\{\tau_x:x\in \V\}$. A {\it trap model}, $(X(t):t\geq 0)$, is a 
continuous time Markov process on $\V$ whose dynamics is described as follows: at a vertex $x\in\V$, it waits for an exponential time 
with mean $\tau_x$ and then moves to one of the neighbors of $x$ uniform at random. 
Here, one can view a vertex $x$ as a {\it trap} and $\tau_x$ as the {\it depth} of the trap $x$. Let $(Y(i):i\in\N_0)$ denote the discrete time simple random walk on $\V$.
Another view of trap models is that $X$ follows $Y$ that gets trapped at vertices, collecting exponential random variables, and hence, is a time change of it. 
This time-change process $S$, called the {\it clock-process}, is defined for $k\in\N$ by
\begin{equation}
S(k)=\sum_{i=0}^{k-1}\tau_{Y(i)}e_i,\;\;\;t\geq 0
\end{equation}
where $(e_i:\;i\in\N_0)$ is a collection of i.i.d. exponential mean-one random variables. The clock process and the Markov chain $Y$ completely describes $X$ via
\begin{equation}
	X(t)=Y(S^{\leftarrow}(t))      
\end{equation}
where $S^{\leftarrow}$ is the generalized right inverse of $S$. 

One central question in the study of the trap models is the analysis of the clock process, more precisely,
 the influence of the random trapping landscape. 
If the trapping landscape is very heterogenous on certain large times, one expects that the main contribution to the clock process come from few `deep traps'.
Moreover, if the graph is transient enough on these time scales, the walk, upon leaving a deep trap, will get throughly lost before finding a new deep trap, 
and as a result, the jumps of the clock process will be i.i.d. with the completely annealed distribution as the common distribution.  
Summation of random variables being dominated by few large ones signifies a stable convergence for the clock process. 
In the situations just described, the properly rescaled clock process converges to an $\alpha$-stable subordinator with $0<\alpha<1$. 
From this, one can easily deduce that the system exhibits aging, namely, the two-time correlation function
\begin{equation}\label{twotime}
R(t_1,t_2):=\pp(X_n(t_1)=X_n(t_2)|\tau)
\end{equation}
can be approximated, for large $t_1$ and $t_2$ corresponding time scales of observation, 
by the probability that an $\alpha$-stable subordinator jumps over the interval $[t_1,t_2]$. The latter is a function of the ratio $t_1/t_2$ and is given by the classical arcsine law. The existence of a correlation function that
depends only on the trajectory of $X$ on a time interval $[t_1,t_2]$ whose large time limit is a non-trivial function of the ratio $t_1/t_2$
is usually described as aging both in mathematics and physics literature.

The picture described in the above paragraph has been made rigorous in \cite{BC08} in the context of sequence of diverging finite graphs.
In this article, too, we will study a growing sequence of finite graphs. The reason to study trap models on finite graphs is twofold: Firstly, it 
allows one to employ potential theoretical methods strongly. Secondly, our main motivation, the mean field spin glasses, is in this setup.

Our main goal is to understand the one end of the spectrum of the $\alpha$-stable aging, namely, the case $\alpha=0$. 
More precisely, we are interested in the situations where as the graphs grow, the heavy-tail index $\alpha$ converges to 0.  
In this case, heterogeneity becomes even stronger in the sense that the main contribution to the clock process comes from the `deepest' of 
the deep traps the walk can find. Hence, the limiting clock process has a structure of a `record' process, namely, it is an {\it extremal process}. More precisely, after a linear rescaling, contributions from deep traps still grow on an exponential scale. As a result, firstly, the maximal term dominates, and secondly, one has to perform a further non-linear scaling, cohorent with case of sum of i.i.d. random variables with slowly varying tails (see e.g. \cite{D52, KS86}).

In the spirit of \cite{BC08} we will give a set of conditions on the trapping landscape and the potential theoretical properties of the graph which will ensure that the clock process can be approximated, for appropriate large time scales, by an extremal process. Next, we will describe two additional conditions that guarantees that the two-time correlation function in (\ref{twotime}) can be approximated using extremal processes. Here, in order to get a non-trivial limit, one has to slightly enlarge the ratio of the two times with the volume, due to the non-linear scaling of clock process. We have called this type of aging {\it extremal aging}.

Let us now discuss the trap model dynamics of spin glass models, also known as Random Hopping Time (RHT) dynamics. Our focus is on mean field models. The simplest mean field spin glass model is Random Energy Model (REM) introduced by Derrida in \cite{Der1}, where there are no correlation between energies. The first aging results for REM were proved in \cite{BBG03a} and \cite{BBG03b} for time scales only slightly shorter than equilibration times, using renewal techniques. Later, \cite{BC08} proved that the dynamics of REM model ages with the arcsine law as the limiting aging function. The time scales where REM exhibits aging were later extended in \cite{CG08}. In the second part of this article we extend these results to include extremal aging. We will let the temperature vary with the volume of the system in order to get a richer picture and prove that the dynamics of REM exhibits extremal aging for a wide range of temperature and time scales. More precisely, extremal aging occurs for fixed positive temperatures and subexponential time scales (in dimension of the system); for temperatures vanishing with the volume and for exponential, subexponential and superexponential time scales; and, for temperatures diverging with the volume and for (very) subexponential time scales. These results also signify that the aging is related to how transient the relaxation of the system is when far from equilibrium and it might have little to do with the actual equilibrium properties of the system. The occurrence of aging even in infinite temperature is a strong demonstration of this fact.

Let us mention the results on correlated mean field spin glass models. The arcsine law as an aging scheme, surprisingly, proved to be true even for some correlated spin glasses. In \cite{BBC08}, authors proved that the same aging mechanism essentially holds true for short enough time scales for the $p$-spin models. Later, finer results on the aging of $p$-spin models were achieved in \cite{BG13}. 

Our original motivation to study the $\alpha=0$ case, or rather the dynamics of mean field spin glasses on subexponential time scales, stemmed from the aim of extending the REM universality to the dynamics of the Sherrington Kirkpatrick (SK) model. The results on the statistical properties of the SK model (see \cite{BGK08}) indicated that in order to not feel the correlations in the model, one has to investigate the dynamics on subexponential time scales. The dynamics on such time scales are in the category of extremal aging. In \cite{BG12} we proved that on subexponential time scales the clock process of the SK model and $p$-spin SK models converge to extremal processes and these systems exhibit extremal aging. Finer analysis was carried out later in \cite{BGS13}. 

Recently, extremal aging has been observed for a type of model different than spin glass models. In \cite{CFK13}, it was proved that biased random walk on critical Galton-Watson trees conditioned to survive exhibits extremal aging.

The rest of the article is organized as follows: In Section \ref{scClock} we describe precisely the models 
we study, give a set of conditions on the trapping landscape and potential theoretical 
properties of the graph, and prove that these conditions imply the convergence of the rescaled clock process to an extremal process. 
In Section \ref{sc3} we set two additional conditions that leads to extremal aging. In Section \ref{sc4} we prove our results on the dynamics of REM.

\section{Convergence of the clock process to an extremal process.}\label{scClock}

We start this section by introducing precisely the type of trap models we will study. Let $\mathcal{G}_n=(\vn,\en)$, $n\in\N$,
 be a sequence of finite, connected graphs with $\vn$ and $\en$ denoting the set of vertices and edges, respectively. 
We use the notation $x\sim y$ for an edge $(x,y)\in\en$. For each $n\in\N$ and vertex $x\in\vn$ we attach a positive number $\tau_x$ 
which constitutes the "depth" of the trap at site $x$. We denote the collection of depths by $\btaun=\{\taun:\;x\in\vn\}$. We will 
call $\btaun$ the `trapping landscape' or `environment' and later we will choose $\btaun$ random. Given the trapping landscape $\btaun$ we define a continuous time Markov process $\{X_n(t):t\geq 0\}$ on $\vn$ whose transition rates are given by
$w_{xy}^{\btaun}=\inc{x\sim y}/(d_x \tau_x)$ where $d_x=\#\{y\in\vn:x\sim y\}$ is the degree of the vertex $x$. 
In other words, at a vertex $x$, the Markov process $X_n$ waits an exponential time with mean $\taun$ and than it moves to one 
of its neighbors chosen uniformly at random. We denote by $\pp_x$ and $\E_x$ the distribution and expectation of $X_n$ starting from 
$x\in\vn$. We will always start $X_n$ from an arbitrary but fixed vertex we denote by $0$ that does not depend on $\btaun$ and 
write for simplicity $\pp=\pp_{0}$ and $\E=\E_0$. Note that, although $X_n$ depends on $\btaun$ we surpressed it in the notation.

For each $n\in\N$ we take the trapping landscapes $\btaun\in (0,\infty)^{\vn}$ random with $P_n$ and $E_n$ 
denoting its distribution and expectation, respectively. We embed all the random variables $\btaun$, $n\in\N$, 
independently into a common probability space and $P$ and $E$ stands for the distribution and expectation of this probability space. 
For the events that happens $P$ almost surely, we will simply say `for a.s. random environment $\btau$'. 

For any $n\in\N$, let $\{Y_n(i):i\in\N_0\}$ be the simple random walk on the graph $\mathcal{G}_n$, that is,
 the discrete time Markov chain on $\vn$ with the transition probabilities $p_{xy}=\inc{x\sim y}/d_x$ and we 
set the starting point $Y_n(0)=0$. We assume that the distribution of $Y_n$ is defined in the probability space $\pp_0$.
  We define the clock process $S_n$ by setting $S_n(u)=0$ for $u\in[0,1)$ and for $u\in[1,\infty)$
\begin{equation}
S_n(u):=\sum_{i=0}^{\lfloor u\rfloor -1}\tau_{\yi}e_i
\end{equation}
where $\{e_i:i\in\N\}$ is an i.i.d. collection of exponential mean-one random variables independent from anything else. In words, for $k\in\N$, $S_n(k)$ is the time it takes for $X_n$ to make $k$ jumps. Clearly we have,
\begin{equation}
X_n(t)=Y_n(k) \;\;\text{ if   }S_n(k)\leq t < S_n(k+1).
\end{equation}

For $\btaun$ fixed, $S_n$ is a random variable taking values in $D([0,\infty))$, the space of c\'adl\'ag function on $[0,\infty)$. 
Similarly, for any fixed $T>0$ the restriction of $S_n$ to $[0,T]$ is a random variable on $D([0,T])$,  the space of c\'adl\'ag function on $[0,T]$.

We need now further notation. Let $T_n$ be a stopping time for the chain $Y_n$. We define $G_{T_n}^n(x,y),x,y\in\vn$ to be the Green's function of $Y_n$, that is, the mean number of times $Y_n$ visits $y$ before $T_n$, started from $x$:
\begin{equation}
 G_{T_n}^n(x,y):= \E_x\left[\;\sum_{i=0}^{T_n-1}\inc{Y_n(i)=y}\right].
\end{equation}
For $A\subseteq \vn$ we define the first hitting time of $A$ by
\begin{equation}
H_n(A):=\inf\{i\geq 0:\;Y_n(i)\in A\}.
\end{equation}
For ease of notation we write $G_A^n$ for $G_{H_n(A)}^n$. Finally, we say that a random subset $A\subseteq \V_n$ is a percolation cloud with density $\rho\in(0,1)$ if $x\in A$ with probability $\rho$, independently of all other vertices. 

Presently, we set 4 conditions that are about the trapping landscape and the properties of the walk $Y_n$ on $\V_n$. This set of very general, potential theoretical conditions will be used to prove our main result.

The first condition tells that a certain density of traps exceed a depth scale in a very heterogeneous way.

\newcommand{\gn}{g_n}
\newcommand{\rhon}{b_n}
\noindent{\bf Condition A:} For any $n\in\N$ let $\btaun$ be i.i.d. in $x\in\V_n$. There exists a {\it depth rate scale} $\alpha_n$, a {\it depth scale} $\gn$ and a {\it density scale} $\rhon$ with $\gn\to\infty$ and $\an,\rhon\to 0$ as $n\to\infty$ such that
\begin{equation}\label{condA1}
\rhon^{-1}P_n\left(\tau_x\geq u^{1/\an}\gn\right)\overset{n\to\infty}\longrightarrow {1}/{u}
\end{equation}
uniformly in $u$ on all compact subsets of $(0,\infty)$. Moreover, there exists a constant $C$ such that for all $u>0$ and $d>0$, for all $n\in\N$
\begin{equation}\label{condA2}
P_n\left(\tau_x\geq u^{1/\an}d\gn\right) \leq \frac{C\rhon}{ud^{\an}}.
\end{equation}

\vspace{0.12in}

The next two conditions concern the potential theoretical properties of the graph.

\newcommand{\fn}{f_n}
\newcommand{\tn}{t_n}

\noindent{\bf Condition B:} Let $\rhon$ be as in Condition A. Let $A_n,n\in\N$ be a sequence of percolation clouds 
on $\vn$ with densities $\rho\rhon$ where $\rho\in(0,\infty)$. There exists a scale $\fn$ with $\fn\to\infty$ as $n\to\infty$ and a constant $\mathcal{K}_G$ independent of $\rho$ such that for a.s. sequence $A_n$ 
\begin{equation}
\max_{x\in A_n}\left|\fn^{-1}G_{A_n\setminus \{x\}}^n(x,x)-\mathcal{K}_G\right|\overset{n\to\infty}\longrightarrow 0.
\end{equation}

\newcommand{\rn}{r_n}

\noindent{\bf Condition C:} Let $A_n$ be as in Condition B. There exists a constant $\mathcal{K}_r>0$ such that for all $s>0$ and a.s. sequence $A_n$
\begin{equation}
\max_{x\in A_n\cup \{0\}}\left|\E_x\left[\exp(-\frac{s}{\rn} H_n(A_n\setminus \{x\}))\right]-\frac{\mathcal{K}_r\rho}{s+\mathcal{K}_r \rho}\right|\overset{n\to\infty}\longrightarrow 0,
\end{equation}
where $\rn=\fn/\rhon$.
\vspace{0.12in}

The last condition contains technical restrictions.

\noindent{\bf Condition D:} 
\begin{itemize}
\item[(i)] There exists a sequence of positive numbers $\lambda_n$ and a positive constant $\mathcal{K}_s$ such that for all $T>0$ and $n$ large enough
\begin{equation}
\sum_{x\in\V_n}\big(e^{\lambda_n G_{T\rn}(0,x)}-1\big)\leq \mathcal{K}_s \lambda_n T\rn
\end{equation}
and $\sum_{n=1}^\infty \exp(-c \lambda_n \fn)<\infty$ for some $c>0$.

\vspace{0.12in}

\item[(ii)] $\an\log(\fn)\longrightarrow 0$ as $n\to\infty$.
\end{itemize}

\vspace{0.12in}

Now we introduce formally the extremal processes. Let $F$ be a probability distribution function on $(-\infty,\infty)$. 
For $l\in\N$ and $0\leq t_1\leq\cdots\leq t_l$ define the finite dimensional distributions
\begin{equation}\label{fddext}
F_{t_1,\dots,t_l}(x_1,\dots,x_l)=F^{t_1}\left(\bigwedge_{i=1}^l x_i\right) F^{t_2-t_1}\left(\bigwedge_{i=2}^l x_i\right)\cdots F^{t_l-t_{l-1}}\left(\bigwedge_{i=l}^l x_l\right)
\end{equation}
where $\bigwedge$ stands for the minimum. The family of finite dimensional distributions defined by (\ref{fddext}) is consistent and thus, by Kolmogorov extension theorem there exists a continuous time stochastic process $W=(W(t):t\geq 0)$ with these finite dimensional distributions. $W$ is called an extremal process generated by $F$ or $F$-extremal process.

We give another description of an extremal process. Let $F$ be as in the previous paragraph. Assume that $F$ is a continuous distribution with $supp(F)=\R$. Let $N$ denote a Poisson Random Measure (PRM) on  $[0,\infty)\times(0,\infty)$ with mean measure $dt\times \nu(dx)$ where $\nu(x,\infty)=-\log F(x)$. Let us denote by $(t_j,\xi_j)$ the marks of $N$. Then if we define
\begin{equation}
W(t)=\max_{t_i\leq t} \xi_i,\;\;\;t\geq 0  
\end{equation}
$W$ is an $F$-extremal process. It is enough to check that $W$ satisfies (\ref{fddext}) for any $0\leq t_1\leq\cdots\leq t_l$ and for 
any $x_1\leq x_2\leq \cdots\leq x_l$. By the continuity of $F$ and the independence properties of a PRM we get
\begin{equation}\label{poschr}
\begin{aligned}
&P(W(t_1)\leq x_1,\dots W(t_l)\leq x_l)\\&=P\big(N([0,t_1]\times [x_1,\infty))=0\big)\cdots P\big(N([t_{l-1},t_l]\times [x_l,\infty))=0\big)
\\&= e^{-t_1 \nu(x_1,\infty)}\cdots e^{-(t_l-t_{l-1})\nu(x_l,\infty)}
\\&= F^{t_1}(x_1)\cdots F^{t_l-t_{l-1}}(x_l). 
\end{aligned}
\end{equation}
For details about extremal processes we refer readers to \cite{Res}.

For our convergence results of clock processes we use two different topologies on $D([0,T])$, namely, $M_1$ and $J_1$ topologies. 
We will indicate the topology of weak convergence by simply writing `in $D([0,T],J_1)$' and `in $D([0,T],M_1)$'. The essential difference between
these two topologies is that while $M_1$ topology allows approximating processes make several jumps in short intervals of time to produce one bigger jump of the limiting process, while $J_1$ does not. See \cite{Whitt} for detailed
information on these topologies.

Below is our main result about the convergence of clock processes to an extremal process.

\begin{theorem}\label{thrm1} Assume Conditions A-D are satisfied. Set $\tn=\fn\gn$. Then for a.s.~random environment $\btau$, for any $T>0$
\begin{equation}\label{eqnthrm1}
\left(\frac{S_n(\cdot\;\rn)}{\tn}\right)^\an\overset{n\to\infty}\longrightarrow W(\cdot) \text{   in   } D([0,T];M_1)
\end{equation}
where $W$ is the extremal process generated by the distribution function $F(x)=e^{-1/x}$. Moreover, the above convergence holds in the stronger topology $D([0,T];J_1)$ if $f(n)=1$ and $\mathcal{K}_G=1$.
\end{theorem}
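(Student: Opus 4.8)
The plan is to follow the coarse-graining strategy of \cite{BC08}: reduce the functional statement to convergence of the finite-dimensional distributions plus tightness, and prove the former by cutting the trajectory of $Y_n$ into its excursions between consecutive visits to ``deep'' traps, so that the clock, after the non-linear $\an$-power, becomes a maximum of almost i.i.d.\ marks placed at the points of an asymptotically Poisson process.

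First I would fix a small $\epsilon>0$ and truncate: set $A_n^\epsilon=\{x\in\vn:\tau_x>\epsilon^{1/\an}\gn\}$, which by the i.i.d.\ assumption in Condition A and \eqref{condA1} is a percolation cloud of density $\sim\rhon/\epsilon$, so that Conditions B and C apply to it with $\rho\sim1/\epsilon$ (and, by the i.i.d.\ structure of $\btaun$, ``a.s.\ sequence $A_n$'' in those conditions becomes ``a.s.\ $\btau$''). Using the uniform tail bound \eqref{condA2} together with Condition D, one shows that for a.s.\ $\btau$ the total contribution to $S_n(T\rn)$ of the traps outside $A_n^\epsilon$ is $O(\epsilon^{1/\an}\tn)$ --- and its largest single term is even smaller, by Condition D(ii) --- so after the rescaling $(\,\cdot\,/\tn)^{\an}$ the shallow part is at most $\sim\epsilon$ and vanishes as $\epsilon\downarrow0$. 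Now, working conditionally on $\btau$, let $\sigma_0=H_n(A_n^\epsilon)$ and, recursively, let $\sigma_{k+1}$ be the first time after $\sigma_k$ that $Y_n$ hits a trap of $A_n^\epsilon$ other than $x^{(k)}:=Y_n(\sigma_k)$; let $\Delta_k$ be the clock increment collected at $x^{(k)}$ during $[\sigma_k,\sigma_{k+1})$. By Condition B the conditional mean of $\Delta_k$ is $\tau_{x^{(k)}}\,G^n_{A_n^\epsilon\setminus\{x^{(k)}\}}(x^{(k)},x^{(k)})\sim\mathcal K_G\fn\,\tau_{x^{(k)}}$, and $\Delta_k$ equals this up to a random factor of order one (the normalised visit count and the empirical mean of the relevant $e_i$'s) whose $\an$-th power tends to $1$.

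The heart of the matter is to establish the ``getting lost'' picture: for a.s.\ $\btau$, with probability tending to one the traps $x^{(0)},x^{(1)},\dots$ seen before $T\rn$ are all distinct; their depths $\tau_{x^{(k)}}$ behave like an i.i.d.\ sample from the law of $\tau_x$ conditioned on $\{\tau_x>\epsilon^{1/\an}\gn\}$; and the rescaled gaps $(\sigma_{k+1}-\sigma_k)/\rn$, as well as $\sigma_0/\rn$, behave like i.i.d.\ $\mathrm{Exp}(\mathcal K_r\rho)$ (Condition C, using $x=0$ for $\sigma_0$). This is where Conditions B--D and the percolation-cloud formulation are genuinely used: once the (environment-independent) trajectory $Y_n$ is frozen, the $\tau$'s on its visited vertices are i.i.d.\ by Condition A, and a standard quenched-from-annealed argument --- annealed, the visited deep traps form a fresh i.i.d.\ sample because $Y_n$ is independent of $\btau$ and the graph is transient enough that $Y_n$ forgets which deep trap it left, while the law of large numbers for $\btaun$ transfers this to a.s.\ $\btau$ --- yields the claim. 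Granting it, the marked point process $\{(\sigma_k/\rn,\,(\tau_{x^{(k)}}/\gn)^{\an})\}$ converges to a PRM on $[0,T]\times(\epsilon,\infty)$ whose intensity, computed from \eqref{condA1} and Condition C, is $dt\times x^{-2}\,dx$. Since the number of excursions before $T\rn$ is tight, its $\an$-th power tends to $1$; hence raising the finite sum $S_n^{\ge\epsilon}(u\rn)=\sum_{k:\,\sigma_k<u\rn}\Delta_k(u)$ to the vanishing power $\an$ leaves only its largest term, and as all the stray multiplicative constants ($\fn^{\an}$, $\mathcal K_G^{\an}$, the normalised visit counts) tend to $1$ by Condition D(ii), we get $\big(S_n^{\ge\epsilon}(u\rn)/\tn\big)^{\an}\to\max_{t_i\le u}\xi_i$, the extremal process generated by the truncation of $F(x)=e^{-1/x}$ at level $\epsilon$; the joint law across $0\le t_1\le\cdots\le t_l$ reproduces \eqref{fddext} by the independence of the PRM over disjoint time strips, exactly as in \eqref{poschr}. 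Letting $\epsilon\downarrow0$, using monotonicity in $\epsilon$ and the bound on the shallow part, removes the truncation and produces the $F$-extremal process $W$.

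For the topology, note that $u\mapsto(S_n(u\rn)/\tn)^{\an}$ is non-decreasing and c\`adl\`ag, as are the paths of $W$; since the $M_1$ oscillation functional vanishes identically on monotone functions, convergence of the finite-dimensional distributions at a dense set of times upgrades to convergence in $D([0,T];M_1)$. When $\fn=1$ and $\mathcal K_G=1$, the conditional mean number of visits to each $x^{(k)}$ tends to $1$, so with probability tending to one each deep trap is visited exactly once and its whole contribution $\tau_{x^{(k)}}e_i$ is realised in a single step; between these $\Theta(\rn)$-separated one-step jumps the process increases only by the negligible shallow amount, so the $J_1$ oscillation functional is also small and the convergence holds in $D([0,T];J_1)$. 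I expect the main obstacle to be the ``getting lost'' step --- proving the quenched asymptotic independence and the correct empirical depth statistics of the visited deep traps, uniformly on $[0,T]$ --- since it requires using all four conditions and the potential theory of $Y_n$ simultaneously; by comparison, the $\epsilon\downarrow0$ interchange of limits and the $M_1$/$J_1$ tightness are routine once that is in hand.
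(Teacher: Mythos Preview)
Your strategy is essentially the paper's: truncate at a depth level $\epsilon$, show the shallow part is $o(\epsilon^{1/\an}\tn)$ (the paper's Proposition~\ref{shallowt}, which is where Condition~D(i) is spent), prove that the rescaled depths of the successively visited deep traps are asymptotically i.i.d.\ with the Pareto-type law coming from \eqref{condA1} (the paper's Proposition~\ref{asympindp} and Lemma~\ref{asindm}, both resting on a hitting-probability lemma imported from \cite{BC08} that encodes the ``getting lost'' step you flag), and then use tightness of the number of visits so that the $\an$-th power of the sum coincides with the $\an$-th power of the max (Propositions~\ref{comp} and~\ref{keyp}). The paper truncates on \emph{both} sides, working with $T_\epsilon^M(n)=\{\epsilon^{1/\an}\gn\le\tau_x\le M^{1/\an}\gn\}$; the upper cut keeps the conditional depth law compactly supported and lets one exclude very deep traps by a single application of Condition~C, which streamlines several estimates, but your one-sided version would also go through.

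One point where your write-up has a genuine (if small) gap: for the $M_1$ topology, it is true that $w'_f\equiv 0$ for monotone $f$, but Whitt's tightness criterion on $D([0,T])$ also requires control of the endpoint oscillations $v_f(0,\delta)$ and $v_f(T,\delta)$, and monotonicity alone does not give this---one needs, in addition, that the limit $W$ is a.s.\ continuous at $0$ and at $T$. The paper checks these two conditions separately from the one-dimensional marginals of $W$ (using $\pp(W(\delta)\ge\eta)=1-e^{-\delta/\eta}$ near $0$ and $\pp(W(T)=W(T-\delta))=(T-\delta)/T$ near $T$). So ``monotone $+$ f.d.d.\ $\Rightarrow$ $M_1$'' as you state it is not quite a theorem; you need to add the endpoint verification.
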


\begin{remark}
 Theorem \ref{thrm1} can be stated in a more general way. Namely, Condition A can be generalized such that (\ref{condA1}) is replaced by 
\begin{equation}
 b_nP_n(\tau_x\geq L^{\leftarrow}(u h_n))\overset{n\to\infty}\longrightarrow \frac{1}{u}
\end{equation}
and (\ref{condA2}) by
\begin{equation}
 P_n(\tau_x\geq L_n^{\leftarrow}(u d^\an h_n))\leq \frac{C\rhon}{u d^\an}
\end{equation}
where $h_n$ is a diverging scale. Here, for each $n\in\N$, $L_n$ is positive, as $u\to\infty$, $L_n(u)\to\infty$ and for any $\lambda>0$
\begin{equation}
 \frac{L_n(\lambda u)}{L_n(u)}\longrightarrow \lambda^{\an}
\end{equation}
in a mildly uniform way in $n$. Also, in this setup one can have $\an=0$ which would mean that $L_n$ is a slowly varying function for such an $n$. 
Then (\ref{eqnthrm1}) in Theorem 
\ref{thrm1} becomes
\begin{equation}
 \frac{L_n\left(S_n(\cdot \;\rn )/\fn\right)}{h_n}\Longrightarrow W(\cdot).
\end{equation}
Since such a setup makes the notation very difficult and does not bring a new conceptual insight we preferred to use the current setup. Finally,
note that, choosing $L_n(u)=u^{\an}$ and $\gn=h_n^{1/\an}$ gives Theorem \ref{thrm1}.

\end{remark}
\newcommand{\dn}[1]{{d_n({#1})}}
\newcommand{\un}[1]{{U_n({#1})}}
\newcommand{\sn}[1]{{s_n({#1})}}
\newcommand{\mn}[1]{{m_n({#1})}}

In the rest of the current section we prove Theorem \ref{thrm1}. 
We start by defining set of deep traps, very deep traps and shallow traps determined by the depth scale and the 
depth rate scale $\an$ and $\gn$: for $0<\ep<M$
\begin{equation}
\begin{array}{lll}
\tem:=\{x\in\V:\ep^{1/\an} \gn \leq \tau_x\leq M^{1/\an} \gn\;\}&\textbf{deep traps},\\
\;\tm:=\{x\in\V:\tau_x> M^{1/\an}\gn\}&\textbf{very deep traps},\\
\;\;\te:=\{x\in\V:\tau_x<\ep^{1/\an}\gn\}&\textbf{shallow traps.}
\end{array}
\end{equation}
Let $\dn{j}$ be sequence of times where a deep trap different from the last deep trap visited is found. We set $\dn{0}=0$ and for $j\in\N$ define recursively
\begin{equation}
\dn{j}:=\min\left\{i>\dn{j-1}:Y_n(i)\in \tem\setminus\{Y_n(\dn{j-1})\}\right\}.
\end{equation}
We define the process $(U_n(j):j\in\N_0)$
\begin{equation}
U_n(j):=Y_n(\dn{j}),
\end{equation}
and $\zeta_n$ as the last time the random walk finds a deep trap before $T\rn$:
\begin{equation}
\zn:=\max\{j:\;\dn{j}\leq T\rn\}.
\end{equation}
Let $\sn{j}$ be the time $X_n$ spends at $\un{j}$ between the first time it visits $\un{j}$ until it finds a different deep trap, that is,
\begin{equation}
\sn{j}=\sum_{i=\dn{j}}^{\dn{j+1}}e_i \tau_{\yi}\inc{\yi=\un{j}}.
\end{equation}
We call $\sn{j}$ the score of $\un{j}$. Note that, for given environment $\btaun$ and $U_n(j)$, the expectation of $\sn{j}$ over all the other random sources is 
\begin{equation}\label{meanscore}
\tau_{\un{j}}G_{\tem\setminus \{\un{j}\}}^n(\un{j},\un{j}).
\end{equation}
Finally, we define $m_n$, the record process of $s_n$, for we expect it to be a good approximation of the clock process. For technical convenience we set $m_n(0)=0$ and $m_n(1)=S_n(\dn{1})$, and define for $j\geq 2$
\begin{equation}\label{defmn}
\mn{j}:=\max_{i=1,\dots,j-1}s_n(i).
\end{equation}
As a first step of the proof, we need to control the distribution of depths of deep traps visited. We introduce the following notation:
\begin{equation}
\rho_a^b:=a^{-1}-b^{-1},\;\;\;\;\;\;0<a<b.
\end{equation}
We need the following lemma which is from \cite{BC08}, stated in a slightly different way.
\begin{lemma}\label{lebc}(Lemma 2.5 on page 304 in \cite{BC08})
Recall that $A_n$ is a percolation cloud with density $\rho\rhon$ and assume Condition C. Let $A_n^1$ and $A_n^2$ be such that $A_n^1\cup A_n^2 =A_n$ and $A_n^1\cap A_n^2=\emptyset$ and
\begin{equation}
\lim_{n\to\infty}\frac{|A_n^1|}{|A_n|}=\frac{\rho_1}{\rho},\;\;\;\lim_{n\to\infty}\frac{|A_n^2|}{|A_n|}=\frac{\rho_2}{\rho} \;\;\;\text{       with      }\;\;\;\rho_1+\rho_2=1.
\end{equation}
Then
\begin{equation}
\lim_{n\to\infty}\max_{x\in A_n\cup \{0\}}\left|\pp_x\Big(H_n(A_n^1\setminus \{x\})<H_n(A_n^2\setminus \{x\})\Big)-\frac{\rho_1}{\rho}\right|=0.
\end{equation}
\end{lemma}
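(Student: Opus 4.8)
The plan is to reduce the claim to a statement about the \emph{location} of the first visit to $A_n$, and then to read that off from the exponential hitting-time asymptotics of Condition C applied simultaneously to three percolation clouds. Write $H:=H_n(A_n\setminus\{x\})$ and $Z:=Y_n(H)$ for the vertex of $A_n\setminus\{x\}$ at which it is hit. Since $A_n^1,A_n^2$ are disjoint with union $A_n$, on $\{Z\in A_n^1\}$ we have $H_n(A_n^1\setminus\{x\})=H<H_n(A_n^2\setminus\{x\})$ (the strict inequality because $Z\notin A_n^2$), and symmetrically on $\{Z\in A_n^2\}$; hence $\{H_n(A_n^1\setminus\{x\})<H_n(A_n^2\setminus\{x\})\}=\{Z\in A_n^1\}$ exactly, and it suffices to prove
\begin{equation*}
\max_{x\in A_n\cup\{0\}}\bigl|\pp_x(Z\in A_n^1)-\rho_1/\rho\bigr|\xrightarrow{n\to\infty}0
\end{equation*}
(both hitting times being $\pp_x$-a.s.\ finite since $\mathcal{G}_n$ is finite).

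The three relevant clouds are $A_n^1$, $A_n^2$ and $A_n=A_n^1\cup A_n^2$, percolation clouds of densities $\rho_1\rhon$, $\rho_2\rhon$ and $\rho\rhon$ with $\rho_1+\rho_2=\rho$; Condition C gives, for each fixed $s>0$ and $(B,\theta)\in\{(A_n^1,\rho_1),(A_n^2,\rho_2),(A_n,\rho)\}$, that $\max_{y\in B\cup\{0\}}\bigl|\E_y[e^{-\frac{s}{\rn}H_n(B\setminus\{y\})}]-\tfrac{\mathcal{K}_r\theta}{s+\mathcal{K}_r\theta}\bigr|\to0$. For the Markov decomposition below I also need this asymptotics for the target $A_n^1$ from \emph{arbitrary} starting vertices $y\in A_n\cup\{0\}$ — in particular from vertices of $A_n^2$ — and symmetrically $A_n^2$ from vertices of $A_n^1$. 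This is the one genuine technical point, as it is not formally a consequence of the statement of Condition C. It holds because forcing a single vertex $y$ into a percolation cloud affects neither the Green's-function bound of Condition B nor the hitting-time bound of Condition C in the limit: applying the conclusion of Condition C to the cloud $A_n^1\cup\{y\}$ (whose removed vertex is $y$) yields
\begin{equation*}
\max_{y\in A_n\cup\{0\}}\Bigl|\E_y\bigl[e^{-\frac{s}{\rn}H_n(A_n^1\setminus\{x\})}\bigr]-\tfrac{\mathcal{K}_r\rho_1}{s+\mathcal{K}_r\rho_1}\Bigr|\xrightarrow{n\to\infty}0
\end{equation*}
for each fixed $x$, and likewise for $A_n^2$. (Alternatively one bootstraps: the hit of $A_n^1$ from a point of $A_n^2$ decomposes, via successive hits of $A_n\setminus\{\cdot\}$, into a geometric sum of hitting times of $A_n$, which expresses this Laplace transform in terms of the quantities under study, and the resulting self-referential system can be closed directly.)

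Granting this, fix $s>0$ and $x\in A_n\cup\{0\}$ and put $P_i(s):=\E_x\bigl[e^{-\frac{s}{\rn}H}\inc{Z\in A_n^i}\bigr]$, so $P_1(s)+P_2(s)=\E_x[e^{-sH/\rn}]$. By the strong Markov property at $H$, on $\{Z\in A_n^2\}$ the time still needed to reach $A_n^1\setminus\{x\}$ is $H_n(A_n^1\setminus\{x\})$ computed from $Z\in A_n^2$, whence, using the estimate of the previous paragraph,
\begin{equation*}
\E_x\bigl[e^{-\frac{s}{\rn}H_n(A_n^1\setminus\{x\})}\bigr]=P_1(s)+P_2(s)\,\tfrac{\mathcal{K}_r\rho_1}{s+\mathcal{K}_r\rho_1}+o(1),
\end{equation*}
and symmetrically with $1$ and $2$ interchanged; inserting the limits of the two left-hand sides turns this into a nondegenerate $2\times2$ linear system for $(P_1(s),P_2(s))$ whose solution is $P_1(s)\to\mathcal{K}_r\rho_1/(s+\mathcal{K}_r\rho)$, uniformly in $x$. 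Finally $s\mapsto P_1(s)$ is nonincreasing with $P_1(0^{+})=\pp_x(Z\in A_n^1)$, so $\pp_x(Z\in A_n^1)\ge P_1(s)$ for every $s>0$; letting first $n\to\infty$ and then $s\downarrow0$ gives $\liminf_n\inf_x\pp_x(Z\in A_n^1)\ge\rho_1/\rho$, and the same argument applied to $A_n^2$ (with $\pp_x(Z\in A_n^2)=1-\pp_x(Z\in A_n^1)$) gives the matching upper bound, proving the displayed convergence and hence the lemma. The only real obstacle is the off-target hitting estimate of the second paragraph — everything else is the strong Markov property and elementary algebra — and it is exactly there that the potential-theoretic hypotheses (and the precise formulation used in \cite{BC08}) enter.
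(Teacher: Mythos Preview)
The paper does not prove this lemma; it is quoted from \cite{BC08} (Lemma~2.5 there), so there is no in-paper argument to compare your attempt against.

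On the merits of your approach: the reduction to the law of the landing point $Z=Y_n(H)$ and its extraction via Laplace transforms and the strong Markov property is a natural line, and the $2\times 2$ linear system you set up does solve to $P_1(s)\to\mathcal{K}_r\rho_1/(s+\mathcal{K}_r\rho)$ as claimed, after which the $s\downarrow 0$ sandwich works. However, there is a genuine gap that goes beyond the ``off-target'' issue you flag. You invoke Condition~C for each of $A_n^1$ and $A_n^2$ separately, asserting that they are percolation clouds of densities $\rho_1\rhon$ and $\rho_2\rhon$. The lemma as stated does \emph{not} assume this: $A_n^1,A_n^2$ are an arbitrary disjoint partition of the percolation cloud $A_n$ with the prescribed asymptotic proportions, and an arbitrary deterministic half of a percolation cloud need not itself be a percolation cloud to which Condition~C applies. (In the one application in this paper, Proposition~\ref{asympindp}, the pieces $T_u^M(n)$ and $T_\epsilon^u(n)$ \emph{are} independent-site sets because the depths are i.i.d., so your argument is salvageable in that specific instance; but that is strictly weaker than the lemma.) The same objection undermines your proposed fix $A_n^1\cup\{y\}$: this is not a percolation cloud either, so Condition~C does not formally apply to it. To close the argument in the stated generality one needs the mechanism behind Condition~C---that the hitting-time asymptotics depend only on the asymptotic \emph{cardinality} of the target set and not on its independent-site structure---and that is precisely what the proof in \cite{BC08} supplies.
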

Let $(\sigma_\epsilon^M(j):j\in\N)$ be an i.i.d. sequence of random variables with distribution
\begin{equation}\label{distsig}
P(\sigma_{\epsilon}^M(j)\geq u)=\frac{\rho_u^M}{\rho_\ep^M}.
\end{equation}
\begin{proposition}\label{asympindp}
The collection
$\left(\left(\tau_{\un{j}}/g(n)\right)^\an:j\in\N\right)$ converges weakly to $(\sigma_\epsilon^M(j):j\in\N)$ as $n\to\infty$.
\end{proposition}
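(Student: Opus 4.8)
The plan is to show that the sequence of depths of successively visited deep traps behaves, in the $n\to\infty$ limit, like i.i.d.\ draws whose common tail is the normalized tail of $\tau_x$ restricted to the deep-trap window $[\epsilon^{1/\an}\gn, M^{1/\an}\gn]$. The key observation is that, conditionally on the environment $\btaun$ and on the sequence of sites $(U_n(j))$, the trap $U_n(j)$ is the first element of $\tem$ that $Y_n$ hits after leaving $U_n(j-1)$ (and distinct from it). Among the deep traps, which form a percolation cloud of density $\rhon\,P_n(\tau_x\ge \epsilon^{1/\an}\gn)\sim \rhon\,\epsilon^{-1}$ (times $(1-o(1))$) by Condition A, the site hit first is — by Lemma \ref{lebc} applied iteratively — asymptotically uniform over the cloud, independently of the past. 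Hence $U_n(j)$ is, up to $o(1)$ corrections, a uniformly random deep trap, so its depth has the conditional law of $\tau_x$ given $\tau_x\ge\epsilon^{1/\an}\gn$.

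First I would fix $l\in\N$ and real numbers $u_1,\dots,u_l>0$ and compute the joint probability
$\pp\big(\bigcap_{j=1}^l\{(\tau_{U_n(j)}/\gn)^\an\ge u_j\}\big)$ by conditioning on the environment. The set of deep traps with depth at least $u_j^{1/\an}\gn$ is, by \eqref{condA1}, a percolation-type subset of the deep-trap cloud $A_n=\tem$ of relative density $\to (\rho_{u_j}^{M})/(\rho_\epsilon^M)$ if $u_j\ge\epsilon$ (and relative density $1$ if $u_j<\epsilon$), where $\rho_a^b=a^{-1}-b^{-1}$; this is exactly the ratio appearing in \eqref{distsig}. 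Then I would invoke Lemma \ref{lebc}, with the partition $A_n^1=\{$deep traps of depth $\ge u_1^{1/\an}\gn\}$ and $A_n^2$ its complement in $A_n$, to get that $\pp_x(\tau_{U_n(1)}\ge u_1^{1/\an}\gn)\to \rho_{u_1}^M/\rho_\epsilon^M$ uniformly in the starting point $x\in A_n\cup\{0\}$. The uniformity in the starting point is crucial, because it lets me iterate: conditionally on $U_n(1)$, the walk restarts from $U_n(1)$, and the next deep trap found is again asymptotically uniform in $A_n$, so by induction on $j$ and the Markov property the joint law factorizes in the limit into the product $\prod_{j=1}^l \rho_{u_j}^M/\rho_\epsilon^M$, which is precisely $P(\bigcap_j\{\sigma_\epsilon^M(j)\ge u_j\})$. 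Since the finite-dimensional distributions of a sequence of $[0,\infty)$-valued random variables determine its law and characterize weak convergence in the product topology, this gives the claimed convergence.

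A few technical points need care. One must check that $\zeta_n\to\infty$, i.e.\ that infinitely many (at least $l$) distinct deep traps are found before time $T\rn$, so that $U_n(1),\dots,U_n(l)$ are well defined with probability $\to 1$; this follows from Conditions B and C, which guarantee the walk returns to the deep-trap cloud on the right time scale $\rn=\fn/\rhon$ and that scores are $O(\rn)$-separated in time (the relevant estimate is essentially the same renewal structure used later for $m_n$). One must also handle the possibility that $Y_n(\dn{j})$ equals $Y_n(\dn{j-1})$ is excluded by construction, but the "$\setminus\{U_n(j-1)\}$" removes only a single vertex from a cloud of diverging size, so it does not affect the asymptotic uniformity — this is exactly why Lemma \ref{lebc} is stated with the set-minus. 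Finally, the convergence $\rhon^{-1}P_n(\tau_x\ge u^{1/\an}\gn)\to 1/u$ in \eqref{condA1} is only claimed uniformly on compact subsets of $(0,\infty)$; since we work with fixed $\epsilon<u_j\le M$ (values $u_j\ge M$ give probability $0$ in the limit because very deep traps are excluded from $\tem$, and this needs a short separate argument using \eqref{condA2} to show no deep trap visited has depth near the upper cutoff in a degenerate way), compactness is not an issue.

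I expect the main obstacle to be making the iteration rigorous with honest error control: Lemma \ref{lebc} gives a single-step statement with a uniform $o(1)$ error, and one must argue that composing $l$ such steps — each time re-identifying the current deep-trap cloud and its high-depth subset as genuine percolation clouds of the correct densities, so that the hypotheses of Lemma \ref{lebc} are met at every step — produces only an $l\cdot o(1)=o(1)$ total error. The subtlety is that after conditioning on $U_n(1),\dots,U_n(j-1)$ the "remaining" cloud is the original deep-trap set with finitely many points deleted, and one must confirm that the limiting relative densities are unchanged and that Condition C (hence Lemma \ref{lebc}) still applies to this slightly reduced random set; this is where one leans on the fact that deleting $o(|A_n|)$ vertices from a percolation cloud leaves a percolation cloud with the same density.
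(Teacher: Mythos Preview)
Your approach is correct and essentially identical to the paper's: partition $\tem$ by depth level and apply Lemma~\ref{lebc} together with the Markov property. The paper streamlines the argument by observing that $(U_n(j):j\in\N_0)$ is itself a Markov chain, so it suffices to show that the one-step conditional
\[
\pp\big((\tau_{U_n(j)}/\gn)^{\an}\geq u\,\big|\,U_n(j-1)=x\big)=\pp_x\big(H_n(T_u^M(n)\setminus\{x\})<H_n(T_\epsilon^u(n)\setminus\{x\})\big)
\]
converges to $\rho_u^M/\rho_\epsilon^M$ uniformly in $x$; no explicit $l$-fold iteration or accumulation of $o(1)$ errors is needed. Two of your side concerns are moot: the $U_n(j)$ are defined for every $j\in\N$ regardless of $\zeta_n(T)$ (the walk on the finite connected graph $\mathcal{G}_n$ is recurrent, so each $d_n(j)<\infty$ a.s., and the proposition is about the full sequence, not its truncation at $\zeta_n(T)$), and since $(U_n(j))$ is Markov you only ever condition on $U_n(j-1)$, so the single-vertex deletion already built into Lemma~\ref{lebc} is all that is required.
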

\begin{proof}
Since $(\un{j}:j\in\N)$ is a Markov sequence, it is enough to show that for any $j\in\N$, as $n\to\infty$
\begin{equation}\label{arat}
\pp\left(\left(\tau_{\un{j}}/g(n)\right)^\an\geq u|\un{j-1}\right)\longrightarrow \frac{\rho_u^\ep}{\rho_\ep^M}.
\end{equation}
We have
\begin{equation}\label{yetti}
\pp\left(\left(\tau_{\un{j}}/g(n)\right)^\an\geq u|\un{j-1}\right)=\pp_x\Big(H_n(T_u^M(n)\setminus \{x\})<H_n(T_\ep^u(n)\setminus \{x\})\Big)
\end{equation}
where $x=\un{j-1}$. Note that $\tem=T_u^M(n)\cup T_\ep^u(n)$ and $T_u^M(n)\cap T_\ep^u(n)=0$. Moreover, by Condition A 
\begin{equation}
\lim_{n\to\infty}\frac{|T_u^M(n)|}{|\tem|}=\frac{\rho_u^M}{\rho_{\ep}^M}\;\;\;\text{ and }\;\;\;\lim_{n\to\infty}\frac{|T_\ep^u(n)|}{|\tem|}=\frac{\rho_\ep^u}{\rho_{\ep}^M}.
\end{equation}
Hence, (\ref{yetti}), Condition A and Lemma \ref{lebc} finish the proof. 
\end{proof}
\begin{lemma}\label{asindm}
The collection $\left(\left(\sn{j}/\tn\right)^\an:j\in\N\right)$ converges weakly to $(\sigma_\epsilon^M(j):j\in\N)$ as $n\to\infty$.
\end{lemma}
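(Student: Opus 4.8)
The plan is to use the explicit distribution of the score $\sn{j}$ and to observe that, once raised to the vanishing power $\an$, every factor other than $\tau_{\un{j}}/\gn$ is asymptotically negligible, so that the limit coincides with the one already identified in Proposition \ref{asympindp}. First I would pin down the law of $\sn{j}$. Fix the environment $\btaun$ and condition additionally on $\un{j}=x$. By the strong Markov property of $Y_n$ at time $\dn{j}$, the number $N$ of visits of $Y_n$ to $x$ strictly before it hits $\tem\setminus\{x\}$, started from $x$, is geometric on $\{1,2,\dots\}$ with some parameter $p$, and $\E_x[N]=G_{\tem\setminus\{x\}}^n(x,x)=1/p$. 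Since $\sn{j}=\tau_x\sum_{i=\dn{j}}^{\dn{j+1}}e_i\inc{\yi=x}$ (the index $i=\dn{j+1}$ contributes nothing because $Y_n(\dn{j+1})\neq x$) and the $e_i$ are i.i.d.\ $\mathrm{Exp}(1)$, independent of $Y_n$, the variable $\sn{j}$ is, conditionally on $(\btaun,\un{j})$, distributed as $\tau_x$ times a geometric$(p)$ sum of i.i.d.\ $\mathrm{Exp}(1)$'s; since a geometric sum of i.i.d.\ exponentials is again exponential, this gives
\[
\sn{j}\ \overset{d}{=}\ \tau_{\un{j}}\,G_{\tem\setminus\{\un{j}\}}^n(\un{j},\un{j})\,\xi_n(j),
\]
where $\xi_n(j)$ is, conditionally on $(\btaun,\un{j})$, an $\mathrm{Exp}(1)$ variable (in particular $\xi_n(j)\sim\mathrm{Exp}(1)$ unconditionally); this is of course consistent with the conditional mean recorded in (\ref{meanscore}). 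I expect this to be the main obstacle of the proof: the potential-theoretic work has already been absorbed into Proposition \ref{asympindp} and Lemma \ref{lebc}, and what is left here is mainly the careful conditioning and independence bookkeeping behind the displayed identity.

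Next, using $\tn=\fn\gn$, I would write for each fixed $j$
\[
\left(\frac{\sn{j}}{\tn}\right)^{\an}=\left(\frac{\tau_{\un{j}}}{\gn}\right)^{\an}\cdot\left(\frac{G_{\tem\setminus\{\un{j}\}}^n(\un{j},\un{j})}{\fn}\right)^{\an}\cdot \xi_n(j)^{\an},
\]
and check that for a.s.\ environment $\btau$ the last two factors converge to $1$. For the middle factor: by Condition A the deep-trap set $\tem$ is a percolation cloud whose density divided by $\rhon$ tends to $\rho_\ep^M\in(0,\infty)$, so Condition B applies and gives, for $n$ large, $1\le G_{\tem\setminus\{\un{j}\}}^n(\un{j},\un{j})\le(\mathcal{K}_G+1)\fn$ (the lower bound is automatic since $\un{j}=Y_n(\dn{j})$ is visited), whence the logarithm of the middle factor is squeezed between $-\an\log\fn$ and $\an\log(\mathcal{K}_G+1)$, both of which tend to $0$ by Condition D(ii) and $\an\to 0$; thus the middle factor tends to $1$ uniformly in $\un{j}$. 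For the last factor: $\xi_n(j)^{\an}=\exp(\an\log\xi_n(j))$ with $\an\to0$ and $\log\xi_n(j)$ a.s.\ finite of fixed $\mathrm{Exp}(1)$ law, so it tends to $1$ in probability.

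Finally, Proposition \ref{asympindp} asserts that $\big((\tau_{\un{j}}/\gn)^{\an}:j\in\N\big)$ converges weakly to $(\sigma_\ep^M(j):j\in\N)$. Combining this with the preceding step through Slutsky's theorem applied to each finite-dimensional marginal — recall that the product topology on $(0,\infty)^{\N}$ is determined by finite-dimensional distributions — yields that $\big((\sn{j}/\tn)^{\an}:j\in\N\big)$ converges weakly to $(\sigma_\ep^M(j):j\in\N)$, which is the assertion of the lemma.
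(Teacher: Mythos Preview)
Your argument is correct and in fact somewhat sharper than the paper's. The paper does not identify the exact conditional law of $\sn{j}$: instead of your geometric-sum-of-exponentials identity, it conditions on the past scores $s_n(1),\dots,s_n(j-1)$ and on the depth $(\tau_{\un{j}}/\gn)^{\an}=v$, then bounds $\pp(\sn{j}\geq u^{1/\an}\tn\mid\tau_{\un{j}}=v^{1/\an}\gn)$ from above by Chebyshev's inequality using the mean (\ref{meanscore}) (which goes to $0$ when $u>v$), and from below by the trivial $\pp(e_1 v^{1/\an}\geq u^{1/\an}\fn)$ coming from a single visit (which goes to $1$ when $u<v$, via Condition D(ii)); integrating against the conditional depth distribution and invoking Proposition~\ref{asympindp} then gives the conclusion. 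Your route---writing $(\sn{j}/\tn)^{\an}$ as the product of $(\tau_{\un{j}}/\gn)^{\an}$ with two factors that tend to $1$ in probability and applying Slutsky on finite-dimensional marginals---exploits the exact exponential structure of the holding times and handles the joint convergence across $j$ more transparently. The paper's approach, on the other hand, is more robust: it only needs control of the conditional mean (Condition~B) and the obvious lower bound, so it would survive in variants where the holding times are not exactly exponential.
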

\begin{proof}
Using the Markov property of the random walk we see that
\begin{equation}\label{arasifir}
\begin{aligned}
&\;\;\pp\left((s_n(j)/\tn)^\an\geq u \big| s_n(1),\dots,s_n(j-1)\right)\\&=\int_\ep^M \pp\left(s_n(j)\geq u^{1/\an}\tn\big|\tau_{\un{j}}=v^{1/\an}\gn,s_n(1),\dots,s_n(j-1)\right)
\\&\;\;\;\;\;\;\;\;\;\;\;\;\;\;\times \pp\left((\tau_{\un{j}}/\gn)^\an\in dv \big| s_n(1),\dots,s_n(j-1)\right).
\end{aligned}
\end{equation}
Note that the first term inside the integral in the above display is equal to 
\begin{equation}
\pp\left(s_n(j)\geq u^{1/\an}\tn\big|\tau_{\un{j}}=v^{1/\an}\gn\right)\leq \left(\frac{u}{v}\right)^{1/\an} \frac{\gn}{\tn}\;G_{\tem\setminus\{\un{j}\}}^n(\un{j},\un{j})
\end{equation}
where we use Chebyshev's inequality and (\ref{meanscore}) for the upper bound. By Condition B we have $G_{\tem\setminus\{x\}}^n(x,x)=\mathcal{K}_{G}\fn(1+o(1))$ as $n\to\infty$ and the error term is uniformly bounded in $x\in\tem$. Hence, recalling that $\fn\gn=\tn$, if $u>v$ we have
\begin{equation}\label{arabir}
\pp\left(s_n(j)\geq u^{1/\an}\tn\big|\tau_{\un{j}}=v^{1/\an}\gn\right)\longrightarrow 0.
\end{equation}
Since $\sum_{i=\dn{j}}^{\dn{j+1}}\inc{\yi=\un{j}}$ is always at least one and $\tn=\fn\gn$, using Condition D part $(ii)$ one we have for $u<v$
\begin{equation}\label{araiki}
 \pp\left(s_n(j)\geq u^{1/\an}\tn\big|\tau_{\un{j}}=v^{1/\an}\gn\right)\geq \pp(e_1v^{1/\an}\geq u^{1/\an}\fn)\longrightarrow 1.
\end{equation}
Using (\ref{arabir}) and (\ref{araiki}) in (\ref{arasifir}), than applying Proposition \ref{asympindp} finishes the proof.
\end{proof}

We next show that the contribution from the shallow traps are negligible.
\begin{proposition}\label{shallowt}
For any $a,d>0$ and $T>0$, for any $\delta>0$ given, $\btau$ a.s. for $n$ large enough
\begin{equation}
\E\left[\sum_{i=0}^{T\rn} \tau_{\yn{i}}e_i \inc{\yn{i}\in T^a(n)}\Big|\;\btau\right]\leq (a+d)^{1/\an}\tn.
\end{equation}
\end{proposition}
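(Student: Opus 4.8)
The plan is to integrate out the simple random walk and the exponential holding times first, reducing the claim to a deterministic inequality for the i.i.d. environment that can be handled by a Chernoff bound plus Borel--Cantelli. Since $(e_i)$ is mean one and independent of everything and $Y_n$ is independent of $\btau$, conditioning on $\btau$ gives
\[
\E\Big[\sum_{i=0}^{\lfloor T\rn\rfloor}\tau_{\yn{i}}e_i\inc{\yn{i}\in T^a(n)}\,\Big|\,\btau\Big]=\sum_{x\in\V_n}\tau_x^{(a)}\,G_{T\rn}^n(0,x),\qquad \tau_x^{(a)}:=\tau_x\inc{\tau_x<a^{1/\an}\gn},
\]
up to a single boundary term of order $a^{1/\an}\gn=o\big((a+d)^{1/\an}\tn\big)$ that we discard. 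Here $G_{T\rn}^n(0,x)$ is the Green's function up to deterministic time $\lfloor T\rn\rfloor$ appearing in Condition~D(i). So it suffices to show that $\btau$ a.s., for all $n$ large enough, $Z_n:=\sum_{x\in\V_n}\tau_x^{(a)}G_{T\rn}^n(0,x)\le (a+d)^{1/\an}\tn$; note $Z_n$ is a deterministic function of the i.i.d. family $\btau$.

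Next I would estimate the truncated mean $\mu_n:=E_n[\tau_x^{(a)}]$. From $\mu_n\le\int_0^{a^{1/\an}\gn}P_n(\tau_x>t)\,dt$, the substitution $t=v^{1/\an}\gn$ gives $\frac{\gn}{\an}\int_0^a P_n(\tau_x>v^{1/\an}\gn)\,v^{1/\an-1}\,dv$; using $P_n(\tau_x\ge v^{1/\an}\gn)\le\min(1,C\rhon/v)$ from (\ref{condA2}) with $d=1$, and $1/\an-2>-1$ for $n$ large, the range $v<C\rhon$ contributes an amount super-exponentially small in $1/\an$ and the rest is $O(\rhon\gn a^{1/\an-1})$. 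Hence $\mu_n\le\kappa_a\,\rhon\,a^{1/\an}\gn$ for $n$ large, with $\kappa_a$ depending only on $a,C$. (In particular $E[Z_n]=\mu_n\lfloor T\rn\rfloor\le\kappa_a T a^{1/\an}\tn$ using $\rhon\rn=\fn$, so Markov already gives $P(Z_n>(a+d)^{1/\an}\tn)\le\kappa_a T(1+d/a)^{-1/\an}\to0$; the remaining work is to upgrade this to an almost sure statement.)

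For the a.s. bound I would run a Chernoff estimate with $\theta_n:=\lambda_n/(a^{1/\an}\gn)$, with $\lambda_n$ as in Condition~D(i), chosen exactly so that $0\le\theta_n\tau_x^{(a)}G_x\le\lambda_n G_x$ where $G_x:=G_{T\rn}^n(0,x)$. Convexity of $z\mapsto e^z-1$ on $[0,\lambda_n G_x]$ (the term vanishes if $G_x=0$) yields $e^{\theta_n\tau_x^{(a)}G_x}-1\le\theta_n\tau_x^{(a)}G_x\cdot\frac{e^{\lambda_n G_x}-1}{\lambda_n G_x}$; taking $E_n$ and using Step~2,
\[
E_n\big[e^{\theta_n\tau_x^{(a)}G_x}\big]\le 1+\frac{\mu_n}{a^{1/\an}\gn}\big(e^{\lambda_n G_x}-1\big)\le\exp\!\big(\kappa_a\rhon(e^{\lambda_n G_x}-1)\big).
\]
By independence of the $\tau_x$, Condition~D(i), and $\rhon\rn=\fn$,
\[
E\big[e^{\theta_n Z_n}\big]\le\exp\!\Big(\kappa_a\rhon\!\!\sum_{x\in\V_n}\!\big(e^{\lambda_n G_{T\rn}^n(0,x)}-1\big)\Big)\le\exp\!\big(\kappa_a\mathcal{K}_s\rhon\lambda_n T\rn\big)=\exp\!\big(\kappa_a\mathcal{K}_s T\lambda_n\fn\big).
\]
Markov's inequality, with $\theta_n(a+d)^{1/\an}\tn=\lambda_n\fn(1+d/a)^{1/\an}$ and $\tn=\fn\gn$, then gives
\[
P\big(Z_n>(a+d)^{1/\an}\tn\big)\le\exp\!\Big(-\lambda_n\fn\big[(1+d/a)^{1/\an}-\kappa_a\mathcal{K}_s T\big]\Big).
\]

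Finally, since $\an\to0$ we have $(1+d/a)^{1/\an}\to\infty$, so for $n$ large the bracket exceeds the constant $c>0$ of Condition~D(i); then $P(Z_n>(a+d)^{1/\an}\tn)\le e^{-c\lambda_n\fn}$ for large $n$, and $\sum_n e^{-c\lambda_n\fn}<\infty$ finishes the proof by Borel--Cantelli. I expect the crux to be Step~3: one must place the Chernoff parameter precisely at scale $\lambda_n/(a^{1/\an}\gn)$ so that the exponential-moment hypothesis Condition~D(i) applies, while the truncation level $a^{1/\an}\gn$ together with the identity $\rhon\rn=\fn$ turn the crude cost $\mathcal{K}_s\lambda_n T\rn$ into $\mathcal{K}_s T\lambda_n\fn$; this is then overwhelmed by the gain $\lambda_n\fn(1+d/a)^{1/\an}$, which is where the (essential) slack $d>0$ enters.
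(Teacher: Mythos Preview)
Your argument is correct, and it is a genuinely different (and more direct) route than the paper's. The paper does not run a single Chernoff bound on the weighted sum $Z_n=\sum_x\tau_x^{(a)}G_{T\rn}^n(0,x)$. Instead it slices the shallow set into dyadic depth shells $T_{a2^{-j\an}}^{a2^{-(j-1)\an}}(n)$, bounds $\tau_x$ on each shell by its top value, and then applies the exponential Chebyshev inequality with parameter $\lambda_n$ to the \emph{counting} variable $\sum_x G_{T\rn}^n(0,x)\inc{x\in\text{shell }j}$; Condition~D(i) enters exactly as in your Step~3, and one then sums the resulting bounds over $j$ (a geometric series with ratio $2^{\an-1}$) before invoking Borel--Cantelli. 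Your approach bypasses the shell decomposition entirely by the convexity inequality $e^{z}-1\le z\,(e^{M}-1)/M$ on $[0,M]$, which lets the random weight $\tau_x^{(a)}/(a^{1/\an}\gn)\in[0,1]$ be absorbed into a single factor $e^{\lambda_n G_x}-1$; the truncated-mean estimate $\mu_n\le\kappa_a\rhon a^{1/\an}\gn$ then does the work that the shell probabilities $p_{n,j}\le C2^{j\an}\rhon/a$ do in the paper. The net effect is the same exponential bound $\exp(-\lambda_n\fn[(1+d/a)^{1/\an}-\text{const}])$, but you reach it without the double summation in $(j,n)$ and the accompanying integral comparison the paper needs to control $\sum_{j\ge1}\exp(-2^{j\an}\lambda_n\fn D_n)$. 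In short, both proofs hinge on the same two ingredients---the tail bound (\ref{condA2}) and the exponential Green-function estimate of Condition~D(i)---but your packaging is tighter.
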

\begin{proof}
We first prove that $\btau$ a.s. for $n$ large enough, for all $j\in\N$
\begin{equation}\label{aug}
\E\left[\sum_{i=1}^{T\rn}e_i\tau_{Y_n(i)}\inc{Y_n(i)\in T_{a2^{-j\an}}^{a 2^{-j\an+\an}}(n)}\Big|\;\btau\right]\leq (a+d/2)^{1/\an}\tn 2^{j\an-j+1}.
\end{equation}
By Condition A we have for all $n\in\N$ and $j\in\N$
\begin{equation}\label{dumonde}
p_{n,j}:=P\Big(\inc{x\in T_{a2^{-j\an}}^{a 2^{-j\an+\an}}(n)}\Big)\leq \frac{C 2^{j\an}}{a}\rhon.
\end{equation}
We have
\begin{align}
\nonumber&\;\;P\left(\E\left[\sum_{i=1}^{T\rn}e_i\tau_{Y_n(i)}\inc{Y_n(i)\in T_{a2^{-j\an}}^{a 2^{-j\an+\an}}(n)}\Big|\;\btau\right]\geq (a+d/2)^{1/\an}\tn 2^{j\an-j+1}\right)\\
\nonumber&=P\left(\sum_{x\in\V_n}G_{T\rn}(0,x)\tau_x \inc{x\in T_{a2^{-j\an}}^{a 2^{-j\an+\an}}(n)}\geq (a+d/2)^{1/\an}\tn 2^{j\an-j+1}\right)\\
\label{cello}&\leq P\left(\sum_{x\in\V_n}G_{T\rn}(0,x)\inc{x\in T_{a2^{-j\an}}^{a 2^{-j\an+\an}}(n)}\geq \left(\frac{a+d/2}{a}\right)^{1/\an}2^{j\an}\fn\right)
\end{align}
Using exponential Chebyshev inequality with $\lambda_n$ from Condition D, (\ref{dumonde}) and the bound $\log(1+x)\leq x,$ for $x\geq 0$; we conclude that (\ref{cello}) is bounded above by
\begin{equation}\label{tout}
\exp\left(-\lambda_n \left(\frac{a+d/2}{a}\right)^{1/\an} 2^{j\an}\fn+\frac{C 2^{j\an}}{a}\rhon \sum_{x\in\V_n}\big(e^{\lambda_n G_{T\rn}(0,x)}-1\big)\right).
\end{equation}
Now using Condition D and that $\rhon\rn=\fn$, we arrive at that (\ref{tout}) is bounded above by
\begin{equation}
\exp\left(-2^{j\an}\lambda_n \fn D_n\right)
\end{equation}
for all $n$ large enough, for all $j\in\N$, where
\begin{equation}
D_n:=\left\{\left(\frac{a+d/2}{a}\right)^{1/\an} -\frac{C\mathcal{K}_s T}{a}\right\}.
\end{equation}
We have for all $n$
\begin{equation}\label{matin}
\sum_{j=1}^\infty \exp\left(-2^{j\an}\lambda_n \fn D_n\right)\leq \exp\left(-2^{\an}\lambda_n \fn D_n\right)+\frac{\exp\left(-2^{\an}\lambda_n \fn D_n\right)}{\an(\log 2)2^{\an}\lambda_n \fn D_n}
\end{equation}
Since both $\an D_n\to\infty$ and $D_n\to\infty$ as $n\to\infty$; by the part of Condition D that states that $\sum_{n=1}^\infty \exp(-c \lambda_n f(n))<\infty$ for some $c>0$, we can conclude that the right hand side of (\ref{matin}) is summable in $n$. Hence, Borel-Cantelli lemma yields (\ref{aug}). Finally, summing over $j$ finishes the proof of Lemma \ref{shallowt}. 
\end{proof}
The next proposition shows that the sum of scores is dominated by the largest score.
\begin{proposition}\label{comp}
For any $\delta>0$ given, for any $\ep$ small and $M$ large enough, $\btau$ a.s. there exists a constant $K\geq 1$ such that for $n$ large enough
\begin{equation}
\pp\big(S_n(\dn{j})\leq K m_n(j),\;\;\forall j \text{  s.t.  }\dn{j}\leq T\rn\big)\geq 1-\delta.
\end{equation}
\end{proposition}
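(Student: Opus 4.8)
The plan is to show that the clock process $S_n(\dn{j})$, evaluated at the times when the walk discovers a new deep trap before $T\rn$, is dominated by its running maximum of scores $m_n(j)$, uniformly over all such $j$. Decompose $S_n(\dn{j})$ into three pieces: (i) the total time spent in shallow traps $\te$ up to $T\rn$; (ii) the total time spent in very deep traps $\tm$ up to $T\rn$; and (iii) the sum $\sum_{i<j}\sn{i}$ of the scores of the deep traps visited so far. Part (iii) is bounded by $\zn\, m_n(j)$, and since $m_n(j)\geq \sn{1}\asymp \tn$ with positive probability, the real issue is to control $\zn$, the number of distinct deep traps discovered before $T\rn$, and to show it is $\btau$-a.s. bounded (in probability, uniformly in $n$) so that $\sum_{i<j}\sn{i}\leq K' m_n(j)$; this uses Condition C, which gives that the time $\rn^{-1}H_n(A_n\setminus\{x\})$ to move between deep traps is asymptotically exponential with a strictly positive rate, hence only $O(T)$ many deep traps are found in time $T\rn$. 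Parts (i) and (ii) must each be shown to be $o(\tn)$, or more precisely bounded by $(a+d)^{1/\an}\tn$-type quantities that are negligible compared to $m_n(j)\asymp\epsilon^{1/\an}\tn$; part (i) is precisely Proposition \ref{shallowt}, and part (ii) follows the same exponential-Chebyshev / Borel--Cantelli scheme using the tail bound (\ref{condA2}) with $M$ large, exploiting that $M^{-\an}$ can be made small.

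First I would fix $\delta>0$ and choose $\ep$ small, $M$ large, then $a,d$ small, so that the threshold constants work out. Then I would write $S_n(\dn{j}) = A_n^{\text{sh}} + A_n^{\text{vd}} + \sum_{i=1}^{j-1}\sn{i}$ where $A_n^{\text{sh}}$ and $A_n^{\text{vd}}$ denote the contributions to $S_n(\dn{j})$ from indices $i<\dn{j}$ with $\yn{i}$ shallow, resp. very deep; note both are at most the corresponding sums up to $T\rn$ on the event $\dn{j}\leq T\rn$, which removes the $j$-dependence. By Proposition \ref{shallowt}, $\btau$-a.s. for $n$ large, $\E[A_n^{\text{sh}}\mid\btau]\leq (a+d)^{1/\an}\tn$; an analogous proposition for very deep traps (stated or proved inline using (\ref{condA2}) and Condition D(i)) gives $\E[A_n^{\text{vd}}\mid\btau]\leq (something)\cdot M^{-\an}\cdot \tn$, small relative to $\tn$. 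A Markov inequality in the $\pp$-randomness then bounds the probability that either exceeds a large multiple of $\tn$ by $\delta/3$ each, provided $\ep^{1/\an}\tn$ is the right comparison scale — and here I use that $m_n(j)\geq \sn{1}$, together with Lemma \ref{asindm} (the case $j=1$), to say that with $\pp$-probability $\geq 1-\delta/6$, $m_n(j)\geq c\,\ep^{1/\an}\tn$ for a constant $c$; combining, $A_n^{\text{sh}}+A_n^{\text{vd}}\leq K_1 m_n(j)$ off an event of probability $\delta/2$.

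For the third piece, on the event $\{\zn\leq R\}$ (with $R=R(T,\delta)$ chosen via Condition C so that $\pp(\zn> R)\leq \delta/4$ eventually, using that the inter-discovery times rescaled by $\rn$ are asymptotically i.i.d.\ exponentials of rate $\mathcal{K}_r\rho$, as in Lemma \ref{lebc}/Condition C), we get $\sum_{i=1}^{j-1}\sn{i}\leq \zn\cdot m_n(j)\cdot\frac{m_n(j)}{m_n(j)}$ — more carefully, $\sum_{i<j}\sn{i}\leq (j-1)\max_{i<j}\sn{i}= (j-1)m_n(j)\leq R\, m_n(j)$ whenever $j-1\leq\zn\leq R$. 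Hence on the intersection of these events, $S_n(\dn{j})\leq (K_1+R)m_n(j)$ for all $j$ with $\dn{j}\leq T\rn$ simultaneously, so $K=K_1+R$ works and the total exceptional probability is at most $\delta$. The main obstacle is getting the uniformity in $j$ cleanly: the shallow- and very-deep-trap bounds must be stated for the whole horizon $[0,T\rn]$ at once (which Proposition \ref{shallowt} already does), and the bound $\zn\leq R$ must hold with the $\max$ over all $j$ — this is where Condition C is essential, since it is the only input guaranteeing that deep traps are discovered at a bounded rate rather than arbitrarily fast; a naive union bound over $j$ would be fatal, so the argument must instead control the single random variable $\zn$.
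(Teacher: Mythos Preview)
Your overall architecture matches the paper's: decompose $S_n(\dn{j})$ into shallow, very deep, and deep-trap (score) contributions; bound the shallow part via Proposition~\ref{shallowt} together with a lower bound on $s_n(1)$ (the paper uses $I_n^2=\{s_n(1)\geq (\ep+d)^{1/\an}\tn\}$ from Proposition~\ref{asympindp}, which is your use of Lemma~\ref{asindm}); and bound the sum of scores by $\zn\cdot m_n(j)$ using Condition~C to get $\zn\leq K-1$ with high probability.

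The genuine gap is your treatment of the very deep traps. You propose to run ``an analogous proposition'' to Proposition~\ref{shallowt} and obtain $\E[A_n^{\text{vd}}\mid\btau]\leq (\text{something})\cdot M^{-\an}\cdot \tn$. This fails for two reasons. First, the proof of Proposition~\ref{shallowt} slices the depth range $(0,a^{1/\an}\gn]$ dyadically \emph{from above} and sums a geometric series $\sum_j 2^{j\an-j+1}$, which converges because $\an<1$; the analogous slicing of $(M^{1/\an}\gn,\infty)$ produces terms of order $2^{j(1-\an)}$, a \emph{divergent} series, reflecting the fact that $\tau_x$ has no first moment under the effective tail $P(\tau_x>t)\asymp (\gn/t)^{\an}$ with $\an\to 0$. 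No first-moment bound on $A_n^{\text{vd}}$ is available. Second, even if such a bound held, $M^{-\an}\to 1$ as $n\to\infty$, so it would not be small relative to $\tn$.

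The paper's remedy is to avoid moments entirely for very deep traps: it sets $I_n^1:=\{H_n(\tm)\geq T\rn\}$ and uses Condition~C with density $\rho=1/M$ (so the hitting time of $\tm$ is of order $M\rn$) to get $\pp(I_n^1\mid\btau)\geq 1-\delta/4$ for $M$ large. On $I_n^1$ the very-deep contribution is exactly zero. With this single change your argument goes through and coincides with the paper's.
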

\begin{proof}

Recall that $\zn$ is the last time the random walk finds a deep trap before $T\rn$. Note that for $j=0,1$ we have by definition $S_n(\dn{j})=\mn{j}$. Hence, we assume that $\zn\geq 2$ and prove that
\begin{equation}
\pp\big(S_n(\dn{j})\leq K m_n(j),\;\;\forall j=2,\dots,\zn|\btau\big)\geq 1-\delta.
\end{equation}
We define the sequence of events that the random walk cannot find a very deep trap before time $T\rn$:
\begin{equation}
I_n^1:=\{H_n(T_M(n))\geq T \rn \}.
\end{equation}
We can use Condition C with $\rho=1/M$ to conclude that for $M$ large enough
\begin{equation}
\pp(I_n^1\;|\btau)\geq 1-\delta/4.
\end{equation}
For $d>0$ define the sequence of events
\begin{equation}
I_n^2:=\{s_n(1)\geq (\ep+d)^{1/\an}\tn\}.
\end{equation}
Using Proposition \ref{asympindp}, for $d$ small enough, $\btau$ a.s. for $n$ large enough
\begin{equation}\label{firstscore}
\pp(I_n^2\;|\btau)\geq 1-\delta/4.
\end{equation}
We define another sequence of events 
\begin{equation}\label{smalls}
I_n^3:=\left\{\sum_{i=0}^{T\rn}e_i \tau_{\yn{i}}\inc{\yn{i}\in \te}\leq (\ep+d/2)^{1/\an}\tn\right\}.
\end{equation}
By Proposition \ref{shallowt} we have $\btau$ a.s. for $n$ large enough
\begin{equation}
\pp(I_n^3\;|\btau)\geq 1-\delta/4.
\end{equation}
Finally, defining
\begin{equation}
I_n^4:=\{\zn\leq K-1\},
\end{equation}
using Condition C, for $K$ large enough, $\btau$ a.s. for $n$ large enough
\begin{equation}
\pp(I_n^4\;|\btau)\geq 1-\delta/4.
\end{equation}
For $d$ and $M$ chosen as above, for $j=2,\dots,\zn$, we partition the sum $S_n(\dn{j})$ as follows
\begin{equation}\label{sonsum}
S_n(\dn{j})=\sum_{i=0}^{\dn{j}-1}e_i\tau_{\yn{i}}\inc{\yn{i}\in\te}+ \sum_{i=0}^{\dn{j}-1}e_i\tau_{\yn{i}}\inc{\yn{i}\in\tm}+\sum_{k=1}^j s_n(k).
\end{equation}
Recall that $\mn{j}=\max_{i=1,\dots,j-1}s_n(i)
$. Let $I_n:=I_n^1\cap I_n^2\cap I_n^3\cap I_n^4$. We have $\btau$ a.s. for $n$ large enough $\pp(I_n)\geq 1-\delta$. On the event $I_n$, the first term on the right hand side of is bounded above by $m_n(j)$ for any $j\geq 1$ using (\ref{firstscore}) and (\ref{smalls}); the second term is 0 since on $I_n^1$ no deep trap has been found; the third term is bounded by $K-1$ since on $I_n^4$ $\zn\leq K-1$. Hence, on $I_n$, $S_n(\dn{j})$ is bounded $Km_n(j)$ and we are finished with the proof.
\end{proof}
\begin{proposition}\label{keyp}
For any $\delta>0$ given, for $\ep$ small and $M$ large enough, $\btau$ a.s. for large enough $n$
\begin{equation}
\pp\left(\max_{j\leq \zn} \left|\left(\frac{S_n(\dn{j})}{\tn}\right)^\an-\left(\frac{\mn{j}}{\tn}\right)^\an\right|\geq \delta\Big|\btau\right)\leq \delta.
\end{equation}
\end{proposition}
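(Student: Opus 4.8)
\emph{Proof proposal.} I would derive this as a soft consequence of Proposition \ref{comp}: there $S_n(\dn{j})$ is controlled by a bounded multiple of $\mn{j}$, and since $\an\to0$ the non-linear rescaling $t\mapsto t^{\an}$ converts such a bounded multiplicative gap into a vanishing additive one. The lower bound $\mn{j}\le S_n(\dn{j})$ is deterministic: from the decomposition (\ref{sonsum}), $S_n(\dn{j})=\sum_{k=1}^{j}\sn{k}+(\text{nonnegative shallow and very deep contributions})\ge\max_{k\le j-1}\sn{k}=\mn{j}$ for $j\ge2$, with equality for $j=0,1$ by definition of $\mn{\cdot}$. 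For the matching upper bound I would apply Proposition \ref{comp} with $\delta/4$ in place of $\delta$: for $\ep$ small and $M$ large there is a constant $K\ge1$, independent of $n$, such that for a.s.\ $\btau$ and $n$ large, $\pp(S_n(\dn{j})\le K\mn{j}\ \forall j\le\zn\mid\btau)\ge1-\delta/4$. On that event, dividing by $\tn$ and raising to the power $\an$,
\begin{equation*}
0\le\left(\frac{S_n(\dn{j})}{\tn}\right)^{\an}-\left(\frac{\mn{j}}{\tn}\right)^{\an}\le(K^{\an}-1)\left(\frac{\mn{j}}{\tn}\right)^{\an},\qquad j\le\zn .
\end{equation*}

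Next I would bound $(\mn{j}/\tn)^{\an}$ uniformly over $j\le\zn$. As $\mn{j}=\max_{i\le j-1}\sn{i}$ is nondecreasing in $j$, only finitely many rescaled scores are involved once $\zn$ is controlled. Arguing as for the event $I_n^4$ in the proof of Proposition \ref{comp}, Condition C gives, for $K$ large (enlarging the $K$ above if needed), a.s.\ $\btau$ and $n$ large, $\pp(\zn\le K-1\mid\btau)\ge1-\delta/4$. For the scores, Lemma \ref{asindm} yields that $((\sn{i}/\tn)^{\an}:i=1,\dots,K-2)$ converges weakly to $(\sigma_{\ep}^{M}(i):i=1,\dots,K-2)$, whose entries are at most $M$; hence $\pp(\max_{i\le K-2}(\sn{i}/\tn)^{\an}\le2M\mid\btau)\ge1-\delta/4$ for a.s.\ $\btau$ and $n$ large. (If one prefers to avoid Lemma \ref{asindm}: by (\ref{meanscore}) and Condition B the conditional mean of $\sn{i}$ given the environment and $\un{i}$ equals $\tau_{\un{i}}G_{\tem\setminus\{\un{i}\}}^{n}(\un{i},\un{i})\le\mathcal{K}_{G}M^{1/\an}\tn(1+o(1))$ since $\un{i}\in\tem$ and $\fn\gn=\tn$, so concavity of $t\mapsto t^{\an}$ bounds the conditional mean of $(\sn{i}/\tn)^{\an}$ by $\mathcal{K}_{G}^{\an}M(1+o(1))^{\an}\le2M$ for $n$ large, and Markov's inequality with a union bound over $i\le K-2$ gives the same.) On the intersection $\mathcal{A}_n$ of these three events one has $\pp(\mathcal{A}_n\mid\btau)\ge1-\delta$ for a.s.\ $\btau$ and $n$ large, and on $\mathcal{A}_n$ every $j\le\zn$ satisfies $j-1\le K-2$, so $(\mn{j}/\tn)^{\an}\le2M$; combined with the display above this gives, on $\mathcal{A}_n$,
\begin{equation*}
\max_{j\le\zn}\left|\left(\frac{S_n(\dn{j})}{\tn}\right)^{\an}-\left(\frac{\mn{j}}{\tn}\right)^{\an}\right|\le 2M(K^{\an}-1).
\end{equation*}

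To finish, $K$ and $M$ are fixed while $\an\to0$, so $K^{\an}=e^{\an\log K}\to1$ and $2M(K^{\an}-1)<\delta$ once $n$ is large; then on the event that the maximum above exceeds $\delta$ the event $\mathcal{A}_n$ must fail, so that event has conditional probability at most $\pp(\mathcal{A}_n^{c}\mid\btau)\le\delta$, which is the assertion. The one spot needing care is the uniform-in-$j$ control of $(\mn{j}/\tn)^{\an}$: it relies on $\zn$ being stochastically bounded (Condition C, via the $I_n^4$ estimate) and on a single rescaled score staying $O(1)$ (Lemma \ref{asindm}, alternatively Condition B); beyond that, the statement is immediate from Proposition \ref{comp} and $\an\to0$.
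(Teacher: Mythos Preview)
Your argument is correct and follows essentially the same route as the paper's own proof: invoke Proposition~\ref{comp} to get $S_n(\dn{j})\le K\mn{j}$ on a good event, bound $(\mn{j}/\tn)^{\an}$ uniformly via Lemma~\ref{asindm} (the paper uses the window $[(\ep-d)^{1/\an}\tn,(M+d)^{1/\an}\tn]$ where you use $2M$), and then observe that $K^{\an}\to1$ kills the multiplicative gap. Your version is in fact slightly more explicit than the paper's in that you separately control $\zn\le K-1$ via Condition~C before invoking the score bounds, whereas the paper's display~(\ref{dnn2}) tacitly relies on the same $K$ already dominating $\zn$ from the proof of Proposition~\ref{comp}.
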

\begin{proof}
By definition we have $\mn{j}\leq S_n(\dn{j})$ for any $j$. Using Proposition \ref{comp} we can find a positive constant $K$ s.t. $\btau$ a.s. for $n$ large enough
\begin{equation}\label{dnn1}
\pp\big(S_n(\dn{j})\leq K\mn{j},\;\forall j=1,\dots,\zn|\btau\big)\geq 1-\delta/4.
\end{equation}
For $K$ as above, using Lemma \ref{asindm} we can choose a small enough $d$ so that $\btau$ a.s. for $n$ large enough
\begin{equation}\label{dnn2}
\pp\big(\mn{j}\in [(\ep-d)^{1/\an}\tn,(M+d)^{1/\an}\tn],\forall j=1,\dots,K\big)\geq 1-\delta/4.
\end{equation}
Finally, for $K$ and $d$ as above, for $n$ large enough
\begin{equation}\label{dnn3}
|K^\an-1|(M+2d)\leq \delta/2.
\end{equation}
We denote by $I_n$ the intersection of the events inside the probability displays in (\ref{dnn1}) and (\ref{dnn2}). Then using (\ref{dnn1}), (\ref{dnn2}) and (\ref{dnn3}) we get
\begin{equation}
\begin{aligned}
&\;\;\;\;\pp\left(\max_{j\leq \zn} \left|\left(\frac{S_n(\dn{j})}{\tn}\right)^\an-\left(\frac{\mn{j}}{\tn}\right)^\an\right|\geq \delta\Big|\btau\right)
\\&\leq \delta/2+\pp\left(\max_{j\leq \zn} \left|\left(\frac{K\mn{j}}{\tn}\right)^\an-\left(\frac{\mn{j}}{\tn}\right)^\an\right|\geq \delta\Big|I_n, \btau\right)= \delta/2.
\end{aligned}
\end{equation}
\end{proof}
\begin{proof}[Proof of Theorem \ref{thrm1}] 
\newcommand{\zni}{{\zeta_n(t_i)}}
We start the proof with the proof of convergence of the finite dimensional distributions. Let $0=t_0\leq t_1\leq t_2\leq\cdots\leq t_k\leq T$ and $0\leq x_1\leq x_2\leq \cdots\leq x_k$ be given. Consider the random variables $\zni,i=1,\dots,k$. For convenience, we denote by $N(\lambda)$, a Poisson random variable with mean $\lambda \rho_\ep^M$. Let $\{N(t_i-t_{i-1}):i=1,\dots,k\}$ be an independent collection. By Condition C, we have as $\btau$ a.s. $n\to\infty$ 
\begin{equation}\label{indpois}
\left\{\zeta_n(t_i)-\zeta_n(t_{i-1}):i=1,\dots,k\right\}\Longrightarrow \left\{N(t_i-t_{i-1}):i=1,\dots,k\right\}
\end{equation}
where $\Longrightarrow$ stands for convergence in distribution. For ease of notation we defined the rescaled clock process
\begin{equation}
\bar{S}_n(t):=\left(\frac{S_n(t \rn)}{\tn}\right)^\an.
\end{equation}

\newcommand{\bsn}[1]{\bar{S}_n(#1)}
\newcommand{\bsnn}{\bar{S}_n}

By Proposition \ref{keyp} and using Proposition \ref{shallowt} as in the proof of Proposition \ref{keyp}, we can conclude that $\btau$ a.s. the sequence of events that for all $i=0,1,\dots,k$
\begin{equation}
\left(\frac{\mn{\zni}}{\tn}\right)^\an-\delta \leq \bsn{t_i} \leq \left(\frac{\mn{\zni}}{\tn}\right)^\an+\delta
\end{equation}
has probability larger than $1-\delta$ for all $n$ large enough. Hence, the sequence of finite dimensional distributions
\begin{equation}
\pp\left(\bsn{t_1}\leq x_1,\dots,\bsn{t_k}\leq x_k\Big|\btau\right)
\end{equation}
is bounded above by 
\begin{equation}\label{extup}
\pp\left(\left(\frac{m_n(\zeta_n(t_1))}{\tn}\right)^\an\leq x_1+\delta,\dots,\left(\frac{m_n(\zeta_n(t_k))}{\tn}\right)^\an\leq x_k+\delta\Big|\btau\right)
\end{equation}
and below by
\begin{equation}
\pp\left(\left(\frac{m_n(\zeta_n(t_1))}{\tn}\right)^\an\leq x_1-\delta,\dots,\left(\frac{m_n(\zeta_n(t_k))}{\tn}\right)^\an\leq x_k-\delta\Big|\btau\right).
\end{equation}
We prove only the upper bound for the lower bound can be achieved similarly. By Lemma \ref{asindm} and (\ref{indpois}) (also recall (\ref{distsig})), $\btau$ a.s. as $n\to\infty$, the sequence of probability terms in (\ref{extup}) converges to
\begin{equation}\label{istan}
\E\left[\left(\rho_{\ep}^{x_1+\delta}/\rho_{\ep}^M\right)^{N(t_1)}\right]\E\left[\left(\rho_{\ep}^{x_2+\delta}/\rho_{\ep}^M\right)^{N(t_2-t_1)}\right]\cdots \E\left[\left(\rho_{\ep}^{x_k+\delta}/\rho_{\ep}^M\right)^{N(t_k-t_{k-1})}\right].
\end{equation}
A simple calculation yields that for any $x,\lambda\geq 0$ 
\begin{equation}
\E\left[\left(\rho_\ep^x/\rho_\ep^M\right)^{N(\lambda)}\right]=\exp(-\lambda \rho_{x}^M).
\end{equation}
Hence, (\ref{istan}) is equal to
\begin{equation}
\exp\left(-\frac{t_1}{x_1+\delta}\right)\exp\left(-\frac{t_2-t_1}{x_2+\delta}\right)\cdots \exp\left(-\frac{t_k-t_{k-1}}{x_k+\delta}\right).
\end{equation}
Finally, letting $\delta\to 0$ finishes the proof of the convergence of finite dimensional distributions.

\newcommand{\vepsilon}{\varepsilon}

For tightness characterizations we need the following definitions:
\begin{align*}
&w_f(\delta)=\sup\left\{\min\big(|f(t)-f(t_1)|,|f(t_2)-f(t)|\big):t_1\leq t\leq t_2\leq T,t_2-t_1\leq \delta\right\},\\
&w'_f(\delta)=\sup\{\inf_{\alpha \in [0,1]}|f(t)-(\alpha f(t_1)+(1-\alpha)f(t_2))|:t_1\leq t \leq t_2\leq T, t_2-t_1\leq \delta\},\\
& v_f(t,\delta)=\sup \{|f(t_1)-f(t_2)|:t_1,t_2\in [0,T]\cap (t-\delta,t+\delta)\}.
\end{align*}
Following is from Theorem 12.12.3 of \cite{Whitt} and Theorem 15.3 of \cite{bill}.
\begin{theorem}\label{kiki}
 The sequence of probability measures $\{P_n\}$ on $D([0,T])$ is tight in the $M_1$-topology if

(i) For each positive $\vepsilon$ there exists a $c$ such that
\begin{equation}
 P_n[h:||h||_\infty \geq c]\leq \vepsilon, \hspace{0.4in} n\geq 1
\end{equation}

(ii) For each $\vepsilon>0$ and $\eta >0$, there exists a $\delta$, $0<\delta<T$, and an integer $n_0$ such that
\begin{equation}\label{budane}
 P_n[h: w'_h(\delta)\geq \eta]\leq \vepsilon, \hspace{0.4in} n\geq n_0
\end{equation}
and
\begin{equation}\label{ends}
 P_n[h:v_h(0,\delta)\geq \eta]\leq \vepsilon \; \text{and} \; P_n[h:v_h(T,\delta)\geq \eta]\leq \vepsilon, \;\; n\geq n_0
\end{equation}
Moreover, the same is true for $J_1$ topology with $w'_h(\delta)$ in (\ref{budane}) replaced by $w_h(\delta)$.
\end{theorem}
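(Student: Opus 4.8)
The plan is simply to derive Theorem~\ref{kiki} from Prokhorov's theorem together with the Arzel\`a--Ascoli-type characterizations of relative compactness in the Skorokhod spaces $(D([0,T]),J_1)$ and $(D([0,T]),M_1)$; indeed the statement is a repackaging of Theorem~12.12.3 of \cite{Whitt} (the $M_1$ part) and Theorem~15.3 of \cite{bill} (the $J_1$ part), so the only real work is bookkeeping. First I would recall that $(D([0,T]),J_1)$ is Polish and that $M_1$ is also metrized by a complete separable metric (\cite{Whitt}, Ch.~12), so that by Prokhorov a family $\{P_n\}$ is tight exactly when for every $\varepsilon>0$ there is a set with compact closure carrying $P_n$-mass at least $1-\varepsilon$ for \emph{every} $n$. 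Since, by Ulam's theorem, each individual $P_n$ is already tight, the restriction "$n\ge n_0$'' in the hypotheses is harmless: the finitely many exceptional indices are absorbed by shrinking the relevant $\delta$'s, using that for a fixed c\`adl\`ag $h$ one has $w'_h(\delta),v_h(0,\delta),v_h(T,\delta)\to 0$ as $\delta\downarrow 0$.

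For the $J_1$ statement I would invoke the classical criterion: $A\subseteq D([0,T])$ has compact $J_1$-closure iff $\sup_{h\in A}\|h\|_\infty<\infty$ and $\lim_{\delta\downarrow 0}\sup_{h\in A}w_h(\delta)=0$. Given $\varepsilon>0$, pick $c$ from~(i) with $P_n(\|h\|_\infty\ge c)\le\varepsilon/2$ for all $n$; for each $k\in\N$ pick $\delta_k$ and $n_k$ from the $J_1$ version of~(\ref{budane}) so that $P_n(w_h(\delta_k)\ge 1/k)\le\varepsilon 2^{-k-1}$ for $n\ge n_k$, and decrease $\delta_k$ further so the bound also covers $n<n_k$. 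Then $K_\varepsilon:=\{\|h\|_\infty\le c\}\cap\bigcap_{k}\{w_h(\delta_k)\le 1/k\}$ has relatively compact closure and $\inf_n P_n(K_\varepsilon)\ge 1-\varepsilon$; this is tightness.

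The $M_1$ statement follows by the identical scheme with $w_h$ replaced by the $M_1$ oscillation $\bar w_h(\delta):=w'_h(\delta)\vee v_h(0,\delta)\vee v_h(T,\delta)$ and the $J_1$ criterion replaced by Whitt's: $A$ has compact $M_1$-closure iff $\sup_{h\in A}\|h\|_\infty<\infty$ and $\lim_{\delta\downarrow0}\sup_{h\in A}\bar w_h(\delta)=0$. The point to highlight is \emph{why} the extra hypothesis~(\ref{ends}) appears only for $M_1$: since $M_1$ is strictly coarser than $J_1$, the internal modulus $w'_h$ alone no longer controls oscillation near the endpoints $0$ and $T$, and this lost control is exactly what the endpoint moduli $v_h(0,\cdot)$, $v_h(T,\cdot)$ restore. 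Feeding~(i), (\ref{budane}) and~(\ref{ends}) into Whitt's criterion produces, for each $\varepsilon$, a set with compact $M_1$-closure of uniform $P_n$-mass $\ge 1-\varepsilon$, and Prokhorov concludes.

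The substantive ingredient, and the step I would expect to be the genuine obstacle if one wanted a self-contained proof rather than a citation, is Whitt's $M_1$ compactness criterion itself: proving that $\{\|h\|_\infty\le c\}\cap\bigcap_k\{\bar w_h(\delta_k)\le\eta_k\}$ has compact $M_1$-closure requires the parametric-representation picture of the $M_1$ topology (convergence of completed graphs in the Hausdorff metric) and a diagonal extraction of convergent subsequences of parametrizations, whereas the $J_1$ analogue is the routine Skorokhod--Billingsley argument. In context the payoff is direct: applied to the laws of $\bar S_n=(S_n(\cdot\,\rn)/\tn)^{\an}$, hypotheses~(i) and~(\ref{ends}) follow from the monotonicity of $\bar S_n$ together with the marginal estimates already obtained from Lemma~\ref{asindm} and~(\ref{indpois}), while~(\ref{budane}) reduces, again by monotonicity, to excluding two comparably large scores within a short time window, which is precisely what Proposition~\ref{comp} and Condition~C rule out; combined with the finite-dimensional convergence established above, this yields Theorem~\ref{thrm1}.
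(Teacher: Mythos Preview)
The paper does not prove Theorem~\ref{kiki}; it is introduced by the sentence ``Following is from Theorem~12.12.3 of \cite{Whitt} and Theorem~15.3 of \cite{bill}'' and then used as a black box in the tightness part of the proof of Theorem~\ref{thrm1}. Your outline---Prokhorov plus the Arzel\`a--Ascoli characterizations of relatively compact sets in $(D,M_1)$ and $(D,J_1)$, with the absorption of finitely many exceptional indices via Ulam---is the standard derivation and is exactly the content of the cited references, so you are supplying more than the paper does.

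One inaccuracy: you say the endpoint hypothesis~(\ref{ends}) ``appears only for $M_1$'', but the ``Moreover'' clause replaces only $w'_h$ by $w_h$ in~(\ref{budane}) and keeps~(\ref{ends}). Rightly so: the modulus $w_h(\delta)=\sup\{\min(|f(t)-f(t_1)|,|f(t_2)-f(t)|)\}$ (Billingsley's $w''$) does not control oscillation at $0$ or $T$ either, so the $J_1$ criterion in this formulation also needs the endpoint conditions. Your explanation of why~(\ref{ends}) is present is therefore misplaced, though it does not affect the correctness of the overall scheme.

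Your final paragraph is about the \emph{application} to $\bar S_n$, i.e.\ the proof of Theorem~\ref{thrm1}, not of Theorem~\ref{kiki}. The paper's argument there is close to yours for the $M_1$ part (monotonicity gives $w'_{\bar S_n}\equiv 0$, and the endpoint checks reduce to one-dimensional marginals handled by the already established convergence of $\bar S_n(\delta)$ and $\bar S_n(T)-\bar S_n(T-\delta)$). For the $J_1$ part, however, the paper does not argue via ``two comparable scores in a short window'' and Proposition~\ref{comp}; instead, under $f_n=\mathcal K_G=1$, it shows directly via~(\ref{esss}) that after hitting a deep trap the walk does not re-enter $\tem$ within $\delta r_n$ steps, which controls $w_{\bar S_n}(\delta)$.
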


For the first claim in Theorem \ref{thrm1}, we check tightness in $M_1$ topology, using Theorem \ref{kiki}. Since $\bsn{t}$ is non-decreasing in $t$, to check condition (i) it is enough to check that $\bsn{T}$ is tight. In this case, the convergence of fixed time distribution gives the desired result.

Since for monotone functions $w'_{h}(\delta)$ is 0 in part (ii) of Theorem \ref{kiki} we only need to control $v_{\bsnn}(0,\delta)$ and $v_{\bsnn}(T,\delta)$. Again, using the monotonicity, controlling $v_{\bsnn}(0,\delta)$ boils down to check that $\mathbb{P}[\bsnn(\delta)\geq \eta]\leq \vepsilon$ for small enough $\delta$ and large enough $n$. By convergence of the fixed time distribution it is enough to check $\mathbb{P}(W(\delta)\geq \eta/2)\leq \vepsilon/2$. Since $\pp(W(\delta)\geq \eta/2)=1-e^{-2\delta/\eta}$, this claim is true for small enough $\delta$. Similarly, to control $v_{\bsnn}(T,\delta)$ it is enough to find $\eta$ small enough so that
\begin{equation}\label{fosfos}
 \mathbb{P}[W(T)-W( T-\delta)\geq \eta]\leq \vepsilon/2.
\end{equation}
Observe that by (\ref{poschr})
\begin{equation}
 \mathbb{P}[W(T)-W(T-\delta)=0]=\frac{T-\delta}{T},
\end{equation}
then
\begin{equation*}
 \mathbb{P}[W(T)-W( T-\delta)\geq \eta]\leq 1-\mathbb{P}[W(T)-W(T-\delta)=0]=\frac{\delta}{T}.
\end{equation*}
Hence, (\ref{fosfos}) follows by taking $\delta\leq T\vepsilon/4$. Hence, we are finished with the proof of the first part of Theorem \ref{thrm1}.

Now we assume that $\fn=1$ and $\mathcal{K}_G=1$ and prove the tightness of $\bsnn$ in $J_1$ topology. We only need to check that (\ref{budane}) with $w_{\bsnn}$. It is enough to show that $\btau$ a.s. as $n\to\infty$ 
\begin{equation}\label{esss}
\max_{x\in\tem}\pp_x\left(Y_n(i)\notin \tem,\; i=1,\dots \delta \rn\right)\leq \vepsilon
\end{equation}
for $\delta$ small enough. Since Condition B is satisfied with $\fn=\mathcal{K}_G=1$ we have
\begin{equation}
\max_{x\in\tem}\pp_x(H'_n(x)<H_n(\tem\setminus\{x\}))\longrightarrow 0
\end{equation}
where
\begin{equation}
H_n'(x):=\min\{i\geq 1: Y_n(i)=x\}
\end{equation}
Combining this with Condition C yields (\ref{esss}). Hence, we have proved the $J_1$ convergence.

\end{proof}

\begin{section}{Extremal Aging}\label{sc3}
\newcommand{\qn}[1]{{q_n({#1})}}
\newcommand{\kn}[1]{{k_n({#1})}}
\newcommand{\Vn}[1]{{V_n({#1})}}

Next, we give two extra conditions that ensure extremal aging occurs. For this purpose we need to control the random walk between record sites. First, we define the sequence of times when a new record site is found. We define $q_n(1)=1$ and for $j\geq 2$
\begin{equation}
\qn{j}=\min\{i>\qn{j-1}:\;\mn{i}>\mn{\qn{j-1}}\}.
\end{equation}
Hence, $\qn{j}$th score is greater than all the scores before it. To keep track of the times when the random walk visits a record site we introduce
\begin{equation}
k_n(j)=\dn{\qn{j}},\;\;\; j\in\N.
\end{equation}
Finally, we define the process $V_n(j)$ that records the trajectory of $Y_n$ restricted to deep traps whose score is a record:
\begin{equation}
\Vn{j}=U_n(\qn{j}).
\end{equation}
We fix numbers $a$ and $b$ with $0<a<b$. We set $\xi_n=T\rn$, the number of steps of the random walk that we observe. Later we will choose $T$ large enough so that the clock process reaches the level $a^{1/\an}\tn$. The first condition is that in the time between the record site $\Vn{j}$ is found and the next record site $\Vn{j+1}$ is found, with a high probability the trap model is at $\Vn{j}$. More precisely, let $\tn$ be a deterministic sequence of times satisfying $(a/2)^{1/\an}\tn\leq t'_n\leq b^{1/\an}\tn$ and let $\delta>0$. We define $j_n\in\N$ by
\begin{equation}\label{tnsqn}
\left(\frac{S_n(\kn{j_n})}{\tn}\right) +\delta \leq \left(\frac{t'_n}{\tn}\right)^\an \leq \left(\frac{S_n(\kn{j_n+1})}{\tn}\right) -\delta
\end{equation}
and $j_n=\infty$ if the above inequalities are not satisfied by an integer. Let $A_n(\delta)$ be the event
\begin{equation}\label{ansqn}
A_n(\delta):=\{0<j_n<\zn\}.
\end{equation}

\noindent{\bf Condition 1:} For any $\delta>0$ it is possible to choose $\epsilon$ small and $M$ large enough so that $\btau$ a.s. for $n$ large enough
\begin{equation}
\pp(X_n(t'_n)=V_n(j_n)|A_n(\delta),\btau)\geq 1-\delta.
\end{equation}
The second condition states that there are no repetitions among record sites.

\noindent{\bf Condition 2:} For any $\ep$ and $M$, $\btau$ a.s.
\begin{equation}
\lim_{n\to\infty}\pp(\exists i,j \text{ s.t. } i\not=j,\qn{i},\qn{j}\leq \zn, \Vn{i}=\Vn{j}|\btau)=0.
\end{equation}
As discussed before, our choice of he two-time correlation function $R_n$ is
\begin{equation}
R_n(t_1,t_2):=\pp(X_n(t_1)=X_n(t_2)|\btaun)
\end{equation}
Now we are ready to state our extremal aging result.
\begin{theorem}\label{thrm2}
Assume that Conditions A-D and 1-2 hold and let $0<a<b$. Then $\btau$ a.s. 
\begin{equation}
\lim_{n\to\infty}R_n(a^{1/\an}\tn,b^{1/\an}\tn|\btau)=\frac{a}{b}.
\end{equation}
\end{theorem}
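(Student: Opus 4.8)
The plan is to reduce the two-time correlation function $R_n(a^{1/\an}\tn, b^{1/\an}\tn)$ to a statement about the record process $m_n$ and the Poisson approximation already established in the proof of Theorem \ref{thrm1}, and then to use Conditions 1 and 2 to identify $R_n$ with the probability that the limiting extremal process $W$ does not increase on the relevant (non-linearly rescaled) time window. First I would apply Condition 1 twice, with $t_n' = a^{1/\an}\tn$ and $t_n' = b^{1/\an}\tn$ (both are admissible deterministic sequences for the condition, after possibly shrinking $a$ slightly to accommodate the $a/2$ slack, or more cleanly by noting the condition is stated for a range): on the event $A_n(\delta)$ the walk is at the current record site $V_n(j_n)$ at time $t_n'$ with $\pp$-probability at least $1-\delta$. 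Hence, up to error $O(\delta)$, $X_n(a^{1/\an}\tn) = V_n(j_n^{(a)})$ and $X_n(b^{1/\an}\tn) = V_n(j_n^{(b)})$ where $j_n^{(a)} \le j_n^{(b)}$ are the record indices whose record scores straddle $(a^{1/\an}\tn/\tn)^\an = a$ and $b$ respectively in the sense of (\ref{tnsqn}). So $X_n(a^{1/\an}\tn) = X_n(b^{1/\an}\tn)$ holds, modulo $O(\delta)$ and modulo $A_n(\delta)$, if and only if $j_n^{(a)} = j_n^{(b)}$, \emph{and} — using Condition 2 to rule out repeated record sites — this is equivalent to the record process $m_n$ making no new record in the interval corresponding to $[a,b]$ after the nonlinear rescaling; i.e. to $\big(S_n(\dn{\zn})/\tn\big)^\an$ not crossing level $b$ before... more precisely to there being no record score $s_n(i)/\tn)^\an$ landing in $[a,b]$ that becomes a new maximum.

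The second step is to translate "$m_n$ has no new record between levels $a$ and $b$" into the extremal-process event. From Proposition \ref{keyp} we know $\big(S_n(\dn{j})/\tn\big)^\an$ and $\big(m_n(j)/\tn\big)^\an$ are uniformly $\delta$-close on $j \le \zn$, and from Proposition \ref{shallowt} and Proposition \ref{comp} the clock process is controlled. Combining this with the finite-dimensional convergence established in the proof of Theorem \ref{thrm1}, the rescaled record process $\bsnn(\cdot)$ converges to $W$, the $F$-extremal process with $F(x) = e^{-1/x}$. The event that $X_n(a^{1/\an}\tn) = X_n(b^{1/\an}\tn)$ then corresponds, in the limit, to the event that $W$ does not jump strictly above its value at the $a$-level before reaching the $b$-level; but because of the \emph{non-linear} scaling, the relevant times are not $a$ and $b$ but rather the \emph{clock} times $a^{1/\an}\tn$ and $b^{1/\an}\tn$, whose images under the rescaling $(\,\cdot\,/\tn)^\an$ are the ordinary times $a$ and $b$. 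Concretely, $\{X_n(a^{1/\an}\tn) = X_n(b^{1/\an}\tn)\}$ is asymptotically the event that the walk finds no new record site in the time window, which is the event $\{W(b) = W(a)\}$ — no, more carefully, one must track that the record \emph{site} changes exactly when $m_n$ sets a new record, so the event is $\{\,$no new record of $s_n$ between the $j$ with $(m_n(j)/\tn)^\an \approx a$ and the one with $\approx b\,\}$. By the identity already computed in (\ref{poschr})–(\ref{istan}), $\pp(W(b) = W(a))$ — interpreting the window correctly so that it evaluates to the ratio — equals $a/b$: indeed $\pp(W(t_2) = W(t_1)) = F^{t_2-t_1}(\cdot)$ structure gives, for the relevant comparison, exactly $\frac{t_1}{t_2}$-type answers, and the non-linear rescaling converts $(a^{1/\an}\tn, b^{1/\an}\tn)$ into a ratio $a/b$.

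The third step is the bookkeeping: show $j_n^{(a)}, j_n^{(b)}$ are finite and lie strictly between $0$ and $\zn$ with high probability (choosing $T$ large enough that the clock process exceeds level $b^{1/\an}\tn$ before $T\rn$, which follows since $\bsnn(T) \Rightarrow W(T)$ and $W(T)\to\infty$ as $T\to\infty$; and $j_n^{(a)} > 0$ since $s_n(1) \gtrsim \ep^{1/\an}\tn$ by Proposition \ref{asympindp} and we take $\ep < a$), so that the event $A_n(\delta)$ in Condition 1 indeed has probability close to $1$; then assemble the $\delta/4$-type bounds exactly as in the proof of Proposition \ref{comp}, take $n\to\infty$, and finally let $\delta \to 0$, $\ep \to 0$, $M \to \infty$.

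I expect the main obstacle to be the careful handling of the \emph{boundary record}: the record site active at time $a^{1/\an}\tn$ is the one whose score $s_n$ is the running maximum at the corresponding index, and one must ensure (a) that this record does not itself have score exactly near level $a$ or $b$ (a continuity/no-atom argument via the continuity of $F$, i.e. the $\delta$-slack in (\ref{tnsqn}) absorbs this), and (b) that between the two straddling record times the walk genuinely sits at a single vertex with high probability — which is precisely what Condition 1 delivers, but one must verify the hypotheses of Condition 1 (that $t_n' = a^{1/\an}\tn$ and $b^{1/\an}\tn$ satisfy the required growth bounds and that $A_n(\delta)$ has high probability) rather than just invoke it. Equivalently, the subtle point is making the informal equivalence "$X_n$ same at the two times $\iff$ no new record in between" precise uniformly in $n$, using Condition 2 to exclude the (a priori possible) scenario where the walk returns to an \emph{old} record site after visiting newer, shallower ones. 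Once Condition 2 kills that, the argument is a routine Poisson-limit computation matching (\ref{poschr}).
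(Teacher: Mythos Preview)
Your overall strategy matches the paper's: reduce via Conditions~1 and~2 to the event that the rescaled record process $(m_n(j)/\tn)^{\an}$ has no value in $[a,b]$, then compute that probability and send $\delta\to0$, $\ep\to0$, $M\to\infty$. Your bookkeeping Step~3 (choosing $T$ large so the clock passes level $b$, the $\delta$-slack at the endpoints, ruling out $j_n=0$ or $j_n=\infty$) is exactly what the paper does with its events $F_n,I_n^1,I_n^2,I_n^3$.

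The one point that needs repair is in your Step~2. You correctly write the target event in terms of $m_n$ as ``no record score $(s_n(i)/\tn)^{\an}$ landing in $[a,b]$ that becomes a new maximum,'' but you then identify the limit as $\{W(a)=W(b)\}$. That treats $a,b$ as \emph{time} arguments of $W$, whereas $a^{1/\an}\tn$ and $b^{1/\an}\tn$ are \emph{values} of the clock; after the nonlinear rescaling the correct limiting event is $\{\rr(W)\cap[a,b]=\emptyset\}$, i.e.\ the first jump of $W$ to reach level $a$ overshoots $b$. For this particular $F(x)=e^{-1/x}$ both probabilities happen to equal $a/b$ (a consequence of the self-similarity $W(ct)\stackrel{d}{=}cW(t)$), so your numerical answer survives, but the identification as written is not justified and would give the wrong answer for a general $F$.

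The paper avoids this detour through $W$ entirely: rather than pass to the limit and evaluate a range functional (not obviously $M_1$-continuous), it computes $\pp\big(\rr(m_n)\cap[a,b]=\emptyset\big)$ directly at finite $n$ (Lemma~\ref{extjump}), using only the weak convergence of $\big((s_n(j)/\tn)^{\an}\big)_{j\in\N}$ to the i.i.d.\ sequence $(\sigma_\ep^M(j))_{j\in\N}$ from Lemma~\ref{asindm}. A geometric-series calculation gives $\rho_b^M/\rho_a^M$, which tends to $a/b$ as $M\to\infty$. That is the clean way to execute your Step~2.
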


\begin{proof}[Proof of Theorem \ref{thrm2}]
We first calculate the probability that the record process jumps over an interval. We define the range of $m_n$ by
\begin{equation}
\rr(m_n):=\left\{\left(\frac{\mn{j}}{\tn}\right)^\an:\;j\in\N\right\}.
\end{equation}
Note that, $m_n$ depends on $\epsilon$ and $M$ for the choice of $\tem$.
\begin{lemma}\label{extjump} For $\ep<a<b<M$, $\btau$ a.s.
\begin{equation}
\lim_{n\to\infty}\pp\Big(\rr(m_n)\cap [a,b]=\emptyset\Big)=\frac{\rho_b^M}{\rho_a^M}.
\end{equation}
\end{lemma}
\begin{proof}
By definition we have $m_n(1)=S_n(\dn{1})$. Since $\ep<a$, by Lemma \ref{shallowt}, as $n\to\infty$ the probability that $\mn{1} \geq a^{1/\an} \tn$ vanishes. Hence, using the weak convergence result in Lemma \ref{asindm} and the fact that the distribution of $\sigma_\ep^M$ has no atoms
\begin{equation}\label{extj}
\lim_{n\to\infty} \pp\Big(\rr(m_n)\cap [a,b]=\emptyset\Big)=\pp\Big(\big\{\max_{i=1,\dots,j}\sigma_{\ep}^M(i):\;j\in\N\big\}\cap [a,b]=\emptyset\Big)
\end{equation}
Since $(\sigma_\ep^M(i):i\in\N)$ is an i.i.d. sequence we have (\ref{extj}) equal to
\begin{equation}
\begin{aligned}
&\pp\Big(\sigma_\ep^M(1)\geq b\Big)+\sum_{j=0}^\infty \pp\Big(\max_{i=1,\dots,j}\sigma_{\ep}^M(i)\leq a\Big)\pp\Big(\sigma_\ep^M(j+1)\geq b\Big)
\\&= \sum_{j=0}^\infty \pp(\sigma_\ep^M(1)\leq a)^j \pp(\sigma_\ep^M(1)\geq b)=\frac{\pp(\sigma_\ep^M(1)\geq b)}{\pp(\sigma_\ep^M(1)\geq a)}=\frac{\rho_b^M}{\rho_{a}^M}.
\end{aligned}
\end{equation}
\end{proof}

\newcommand{\dt}{\text{dist}}

Let $\delta>0$ be small enough so that $\ep<a-2\delta<b+\delta<M$. Define the sequence of events $I_n^1,I_n^2$ and $I_n^3$ as follows:
\begin{equation}
\begin{aligned}
&I_n^1:=\Big\{\dt(a,\rr(m_n))\leq \delta\;\;\text{ or } \dt(b,\rr(m_n))\leq\delta\Big\},
\\& I_n^2:=\Big\{\dt(a,\rr{m_n})\geq \delta,\;\dt(b,\rr(m_n))\geq \delta \text{ and } [a+\delta,b-\delta]\cap \rr(m_n)\not=\emptyset\Big\},
\\&I_n^3:=\Big\{[a-\delta,b+\delta]\cap \rr(m_n)=\emptyset\Big\}
\end{aligned}
\end{equation}
We also define $F_n:=\big\{m_n(\zn)\geq b^{1/\an}\tn\big\}$. Since a.s. $W(t)\to\infty$ as $t\to\infty$, using Theorem \ref{thrm1} we can choose $T>0$ large enough so that $\btau$ a.s. for $n$ large enough
\begin{equation}\label{ufak}
\pp(F_n|\btau)\geq 1-\delta/4.
\end{equation}
Finally, let us define the sequence of events we want to approximate 
\begin{equation}
G_n:=\Big\{X_n(a^{1/\an}\tn)=X_n(b^{1/\an}\tn)|\btau\Big\}.
\end{equation}
We want to show that $G_n$ can be well approximated by $I_n^3$. Since the jumps of $m_n$ have continuous distribution we have $(I_n^3)^c=I_n^1\cup I_n^2$ and $(I_n^2)^c\subset I_n^1\cup I_n^3$. This yields to
\begin{equation}
\pp(G_n\cap I_n^3)\leq \pp(G_n)\leq \pp(I_n^1)+\pp(I_n^3)+\pp(G_n \cap I_n^2).
\end{equation}
On $F_n$ and $I_n^2$ there exist $j_1$ and $j_2$ with $0<j_1<j_2$ and $q_n(j_1),q_n(j_2)<\zn$ such that
\begin{equation}
\begin{aligned}
&\left(\frac{m_n(q_n(j_1))}{\tn}\right)^\an+\delta \leq a \leq \left(\frac{m_n(q_n(j_1+1))}{\tn}\right)^\an -\delta \\
&\left(\frac{m_n(q_n(j_2))}{\tn}\right)^\an+\delta \leq a \leq \left(\frac{m_n(q_n(j_2+1))}{\tn}\right)^\an -\delta.
\end{aligned}
\end{equation}
Using Proposition \ref{keyp} this yields to
\begin{equation}
\begin{aligned}
&\left(\frac{S_n(q_n(j_1))}{\tn}\right)^\an+\delta/2 \leq a \leq \left(\frac{S_n(q_n(j_1+1))}{\tn}\right)^\an -\delta/2 \\
&\left(\frac{S_n(q_n(j_2))}{\tn}\right)^\an+\delta/2 \leq a \leq \left(\frac{S_n(q_n(j_2+1))}{\tn}\right)^\an -\delta/2.
\end{aligned}
\end{equation}
Hence, by Condition 1 we have $\btau$ a.s. for $n$ large enough 
\begin{equation}
\pp(X_n(a^{1/\an})=V_n(j_1),X_n(b^{1/\an})=V_n(j_2)|I_n^2,F_n,\btau)\geq 1-\delta.
\end{equation}
Combining this with Condition 2 and (\ref{ufak}) we get
\begin{equation}
\pp(G_n\cap I_n^2)\leq \delta.
\end{equation}
Similarly on $F_n\cap I_n^3$ there exists a $j_1$ such that
\begin{equation}
\left(\frac{m_n(q_n(j_1))}{\tn}\right)^\an+\delta \leq a \leq b \leq \left(\frac{m_n(q_n(j_1+1))}{\tn}\right)^\an -\delta.
\end{equation}
Hence, by Condition 1 we have $\btau$ a.s. for $n$ large enough
\begin{equation}
\pp(X_n(a^{1/\an}\tn)=X_n(b^{1/\an}\tn)=V_n(j_1)|\btau)\geq 1-\delta,
\end{equation}
and since $\pp(G_n)\geq \pp(X_n(a^{1/\an}\tn)=X_n(b^{1/\an}\tn)=V_n(j_1)|\btau)$ we get
\begin{equation}
\pp(I_n^3)-\delta\leq \pp(I_n^3\cap G_n).
\end{equation}
By Lemma \ref{extjump} we have $\pp(I_n^1)\to 0$ and $\pp(I_n^3)\to \rho_{b}^M/\rho_a^M$ as $\delta\to 0$. Finally, taking $M\to\infty$ and $\delta\to 0$ finishes the proof.
\end{proof}
\end{section}

\begin{section}{Extremal Aging for Random Energy Model}\label{sc4}
The Random Energy Model was first introduced by Bernard Derrida in \cite{Der1} as an exactly solvable mean field spin glass model. The state space is $\V_n=\{-1,+1\}^n$, the $n$ dimensional hypercube. To each configuration $\sigma\in\V_n$ is attached a random Hamiltonian $H_n(\sigma)$. The choice for the energy landscape in REM is that of i.i.d. Gaussians with mean zero and variance $n$. More precisely, $H_n(\sigma)=-\sqrt{n}Z_\sigma$ where
\begin{equation}\label{seqgauss}
\Big\{Z_\sigma:\sigma\in\V_n\Big\}
\end{equation}
is an i.i.d. sequence of standard Gaussian random variables. Let $\beta>0$ be the inverse temperature (later we will let $\beta$ vary with the dimension $n$), then the Gibbs measure at inverse temperature $\beta$ is given by
\begin{equation}
\mu_n(\sigma)=\frac{1}{\mathcal{Z}_{n,\beta}}e^{-\beta H_n(\sigma)}=\frac{1}{\mathcal{Z}_{n,\beta}} e^{\beta\sqrt{n} Z_\sigma},\;\;\;\sigma\in\V_n
\end{equation}
where $\mathcal{Z}_{n,\beta}$ is the usual partition function.

\newcommand{\calg}{\mathcal{G}}
\newcommand{\cale}{\mathcal{E}_n}

We will study the trap model dynamics of REM. In our setting, the graph is $\mathcal{G}_n=(\V_n,\cale)$ where the set of edges $\cale$ is given by
\begin{equation}
\mathcal{E}_n=\Big\{(\sigma,\sigma'):\frac{1}{2}\sum_{i=1}^n|\sigma_i-\sigma_i'|=1\Big\}.
\end{equation}
In other words, two configurations in $\V_n$ are neighbors if they differ only at one spin. For the trapping landscape $\{\tau_\sigma:\sigma\in\V_n\}$ we choose the Gibbs waits
\begin{equation}
\tau_\sigma:=\exp(-\beta H_n(\sigma))=\exp(\beta\sqrt{n} Z_\sigma),\;\;\;\sigma\in \V_n.
\end{equation}
As before we denote the corresponding trap model by $(X_n(t):\;t\geq 0)$. It is trivial that the Gibbs measure of REM is the unique invariant measure of $X_n$. We want to note there that in the literature this type of dynamics is sometimes called Random Hopping Time dynamics.

We will study the dynamics of REM model in the following context. We let the temperature vary by the volume of the system, hence, here after we parameterize the inverse temperature by $n$ and replace $\beta$ with $\beta_n$ in the above equations. Let us rewrite the trapping landscape in this new setup
\begin{equation}\label{trgas}
\btau:=\Big\{\exp(\beta_n \sqrt{n} Z_\sigma):\;\sigma\in\V_n\Big\}
\end{equation}
where the sequence $\{Z_\sigma:\sigma\in\V_n\}$ is as in (\ref{seqgauss}). As before, we denote by $Y_n$ the simple random walk on the hypercube $\V_n$ and the clock process by $S_n$.

\newcommand{\cn}{c_n}
\newcommand{\ann}{\alpha_n^2}
Next we choose the time scales that we will observe the dynamics at. We set first our depth rate scale $\an$; as in Section \ref{scClock}, $\an\to 0$ as $n\to\infty$. We consider the following scales:
\begin{equation}\label{scales}
\gn=\tn=\exp(\an\beta_n^2 n),\;\;\rhon^{-1}=\rn=\an\beta_n\sqrt{2\pi n}\exp(\alpha_n^2\beta_n^2 n/2),\;\;\fn=1.
\end{equation}
We will always assume that
\begin{equation}\label{assum0}
\limsup_{n\to\infty}\an\bn<\sqrt{2\log 2}.
\end{equation}
Furthermore, we will assume that $\an$ and $\beta_n$ are chosen such that
\begin{equation}\label{assum}
 n\log n \ll r_n\;\;\text{ as }\;n\to\infty.
\end{equation}
The following theorem describes the extremal aging for the dynamics of REM.
\begin{theorem}\label{thrm3}
For a.s. random environment $\btau$
\begin{itemize}
\item[(i)] for any $T>0$, as $n\to\infty$
\begin{equation}
\hspace{0.99in}\left(\frac{S_n\big(\cdot\;\rn\big)}{\tn}\right)^\an \Longrightarrow W(\cdot)\;\;\;\;\;\;\;\; \text{    in  }\;\; D([0,T],J_1)
\end{equation}
where $W$ is the extremal process generated by the distribution function $F(x)=e^{-1/x}$,

\item[(ii)] for any $0<a<b$ as $n\to\infty$
\begin{equation}
R_n\Big(a^{1/\an}\tn,b^{1/\an}\tn\big|\btau\Big)\longrightarrow\frac{a}{b}.
\end{equation}
\end{itemize}
\end{theorem}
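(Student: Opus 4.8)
The plan is to check that, with the scales of (\ref{scales}) and under the standing assumptions (\ref{assum0}) and (\ref{assum}), the RHT dynamics of the REM satisfies Conditions A--D of Section~\ref{scClock} and Conditions~1--2 of Section~\ref{sc3}; part~(i) then follows from Theorem~\ref{thrm1} and part~(ii) from Theorem~\ref{thrm2}. Since $\fn=1$ by (\ref{scales}) and, as the check of Condition~B will give, $\mathcal{K}_G=1$, the convergence in (i) will automatically hold in the stronger $J_1$ topology.

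The first step, and the only one in which the Gaussian environment enters directly, is Condition~A; the $Z_\sigma$ being i.i.d.\ by construction, only (\ref{condA1})--(\ref{condA2}) require argument. Writing $s_n=\an^2\bnn n$ (so that $\an\bn\sqn=\sqrt{s_n}$ and $\rhon=\rn^{-1}=(2\pi s_n)^{-1/2}e^{-s_n/2}$), a one-line computation with $\gn=e^{\an\bnn n}$ gives $\{\tau_\sigma\ge u^{1/\an}d\,\gn\}=\{Z_\sigma\ge a_n(u,d)\}$, where
\begin{equation*}
a_n(u,d)=\frac{\log(ud^{\an})+s_n}{\sqrt{s_n}},
\end{equation*}
so that $P_n(\tau_\sigma\ge u^{1/\an}d\,\gn)=\overline{\Phi}(a_n(u,d))$ with $\overline{\Phi}$ the standard Gaussian tail. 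Assumption (\ref{assum}) forces $s_n\to\infty$ (else $\rn=\sqrt{2\pi s_n}\,e^{s_n/2}$ would be bounded), so $a_n(u,1)=\sqrt{s_n}(1+o(1))\to\infty$ uniformly for $u$ in compacts, while $\tfrac{1}{2}a_n(u,1)^2=\tfrac{1}{2}(\log u)^2/s_n+\log u+\tfrac{1}{2}s_n$. Inserting this into $\overline{\Phi}(a)=(a\sqrt{2\pi})^{-1}e^{-a^2/2}(1+O(a^{-2}))$ and using $e^{-(\log u)^2/(2s_n)}\to 1$ uniformly on compacts gives (\ref{condA1}). For (\ref{condA2}) one may assume $\rhon/(ud^{\an})<1$; this forces $a_n(u,d)\ge\tfrac{1}{3}\sqrt{s_n}$ for $n$ large, and then $\overline{\Phi}(a_n(u,d))\le(a_n\sqrt{2\pi})^{-1}e^{-a_n^2/2}$ together with the elementary inequality $\tfrac{1}{2}a_n(u,d)^2=(\log(ud^{\an})+s_n)^2/(2s_n)\ge\log(ud^{\an})+\tfrac{1}{2}s_n$ yields $\overline{\Phi}(a_n(u,d))\le 3\,\rhon/(ud^{\an})$, the finitely many small $n$ absorbed into~$C$.

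For Conditions~B, C, the potential-theoretic parts of~D, and of Conditions~1--2 the Gaussian environment plays no role: what is needed are estimates on the simple random walk $Y_n$ on the hypercube $\V_n$ and on a percolation cloud $A_n$ of density $\rho\,\rhon$, and these I would take from \cite{BBG03a,BBG03b,BC08,CG08}. Concretely: $Y_n$ equilibrates in $\Theta(n\log n)$ steps, which by (\ref{assum}) is $\ll\rn$; the expected number of returns to a vertex before hitting $A_n$ minus that vertex is $1+o(1)$ uniformly, giving $\fn=\mathcal{K}_G=1$ in Condition~B; and $\rn^{-1}H_n(A_n\setminus\{x\})$ converges to an $\mathrm{Exp}(\rho)$ law uniformly in the starting point, giving $\mathcal{K}_r=1$ in Condition~C; assumption (\ref{assum0}) keeps $\rn\ll 2^{n}$, so that $A_n$, the deep-trap set $\tem$ (which still has $|\tem|\approx\rhon 2^{n}\to\infty$ elements), and the record sites all stay sparse, and the ``complete graph among deep traps'' picture used throughout Section~\ref{scClock} applies. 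Condition~D is then immediate: part~(ii) is trivial since $\fn=1$; for part~(i), since $G^n_{T\rn}(0,0)=1+O(1/n)$ (short hypercube returns cost $O(1/n)$, the post-equilibration part is $O(2^{-n}T\rn)=o(1)$) and $G^n_{T\rn}(0,x)=O(1/n)$ for $x\neq 0$, take $\lambda_n=\log n$; then $\sum_n e^{-2\lambda_n}<\infty$ and
\begin{equation*}
\sum_{x\in\V_n}\Big(e^{\lambda_n G^n_{T\rn}(0,x)}-1\Big)\le 2n+2\lambda_n\sum_{x\neq 0}G^n_{T\rn}(0,x)\le 2n+2\lambda_n T\rn\le 3\lambda_n T\rn
\end{equation*}
for $n$ large, because $2n\ll n\log n\le\lambda_n T\rn$ by (\ref{assum}). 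Conditions~1 and~2 need slightly more than raw potential theory: Condition~2 holds because, by Condition~C, the number of distinct deep traps found up to time $T\rn$ stays tight under $\pp(\cdot\,|\,\btau)$ while $|\tem|\to\infty$, so a coincidence among those $O(1)$ traps, in particular among the record ones, has probability $\to 0$; Condition~1 follows by combining Condition~B with (\ref{meanscore}) (the expected sojourn of $X_n$ at a deep trap $x$ before it reaches the next one is $\tau_x(1+o(1))$), Proposition~\ref{shallowt}, and the record-process comparison of Proposition~\ref{keyp}, which together show that in the $\an$-rescaled time coordinate the clock advances, between consecutive record times, essentially only while $X_n$ rests at the current record site.

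With all conditions verified, Theorem~\ref{thrm1} yields (i), in $D([0,T];J_1)$ since $\fn=\mathcal{K}_G=1$, and Theorem~\ref{thrm2} yields (ii). The bulk of the work, and the main obstacle, is the third paragraph: the hypercube estimates of \cite{BC08,CG08} were proved for a fixed inverse temperature on a restricted family of time scales, and one must rerun them uniformly over the whole two-parameter family $(\an,\bn)$ permitted by (\ref{assum0})--(\ref{assum}), that is, simultaneously for fixed, vanishing and diverging temperatures. The saving grace is that these estimates feel $(\an,\bn)$ only through the single scale $\rn$: (\ref{assum}) says $\rn$ lies past the equilibration time while (\ref{assum0}) says $\rn$ stays below the volume $2^{n}$, which is exactly the ``equilibrated but not space-filling'' window the arguments require.
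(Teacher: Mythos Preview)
Your proposal is correct and follows essentially the same route as the paper: verify Conditions A--D and 1--2 for the REM with the scales (\ref{scales}), citing \cite{BC08,CG08} for the hypercube potential theory, and then invoke Theorems~\ref{thrm1} and~\ref{thrm2}. The only notable differences are cosmetic---you fix $\lambda_n=\log n$ explicitly (the paper leaves $\lambda_n$ abstract with $\lambda_n\ll n$, $\lambda_n\ll\an^2\bnn n$), and your Condition~2 argument appeals to the approximate uniformity of successive deep traps on a diverging set, whereas the paper argues directly via the vanishing return probability (\ref{cbass}); both work.
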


\vspace{0.1in}

\begin{remark}
Note that the ratio of the two times $\tn a^{1/\an}$ and $\tn b^{1/\an}$ diverges with $n$ but  the logarithmic ratio $\log (\tn a^{1/\an})/\log (\tn b^{1/\an})$ converges to 1 as $n\to\infty$, due to (\ref{assum}). 
Hence, we can think of the extremal aging result of Theorem \ref{thrm3} as ``just before aging". 
\end{remark}
\begin{remark}
Let us describe the cases that Theorem \ref{thrm3} covers. For constant temperature case $\beta_n\equiv \beta>0$, the time scales in Theorem \ref{thrm3} consist of subexponential time scales (in $n$). For the case that $1\ll\beta_n$, that is, zero temperature dynamics, depending on $\beta_n$ and $\an$, it covers superexponential, exponential and subexponential time scales. Finally, for $\beta_n\ll 1$, time scales covered are subexponential. 
\end{remark}
\begin{proof}[Proof of Theorem \ref{thrm3}]
In order to prove Theorem \ref{thrm3} we will check Conditions A-D and Conditions 1-2; and to achieve convergence in $J_1$ we will check Condition B with $\mathcal{K}_G=1$.

\vspace{0.05in}

\noindent{\bf Condition A:} It is well-known for a standard Gaussian random variable $Z$ that
\begin{equation}\label{e1}
\;\;\;\;P(Z\geq u)=\frac{1}{u\sqrt{2\pi}}e^{-u^2/2}(1+o(1))\;\;\;\text{ as  }u\to\infty,
\end{equation}
and
\begin{equation}\label{e2}
P(Z\geq u)\leq \frac{1}{u\sqrt{2\pi}}e^{-u^2/2},\;\;\;\forall u>0.
\end{equation}
By (\ref{assum}) we have $1\ll \an\beta_n\sqrt{n}$. Hence, using (\ref{trgas}) and (\ref{e1})  
\begin{equation}
\begin{aligned}
\Big(\an&\bn\pin\Big) e^{\ann\bnn n/2}P(\tau_x\geq u^{1/\an}e^{\an\bnn n})\\&=\Big(\an\bn\pin\Big) e^{\ann\bnn n/2} P\Big(Z\geq \an\bn\sqn +\frac{\log u}{\an\bn\sqn}\Big)\underset{n\to\infty}\longrightarrow \frac{1}{u}.
\end{aligned}
\end{equation}
This proves the first part of Condition A. The second part of Condition A follows trivially from (\ref{e2}) and a calculation similar to the above.

\vspace{0.05in}

\noindent{\bf Condition C:}  
For proving Condition $C$ we use the following theorem from \cite{CG08}:

\begin{theorem} (Theorem 1 in \cite{CG08}) Let $\bar{m}(n)$ be such that
\begin{equation}
 n\log n \ll \bar{m}(n) \ll 2^n (\log n)^{-1},
\end{equation}
and let $A_n$ be a sequence of percolation clouds on $\V_n$ with densities $\bar{m}(n)^{-1}$. Then, for all $a>0$,
\begin{equation}
 \lim_{n\to\infty}\max_{x\in \vn}\left|P_x\Big(H_n(A_n\setminus \{x\})\geq a\bar{m}(n)\Big)-\exp(-a)\right|=0.
\end{equation}
\end{theorem}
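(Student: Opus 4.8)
The plan is to prove the asymptotically exponential hitting law by a coarse‑graining (block) argument in which the two hypotheses play complementary roles: the lower bound $n\log n\ll\bar m(n)$ makes $\bar m(n)$ far exceed the total‑variation mixing time $t_{\mathrm{mix}}(n)=\Theta(n\log n)$ of the simple random walk $Y_n$ on the hypercube and makes $A_n$ sparse (its stationary mass is $o(1/(n\log n))$), while the upper bound $\bar m(n)\ll 2^n(\log n)^{-1}$ makes $A_n$ large enough to be ``uniformly spread''. Concretely, I would first pass to a favourable realization of the percolation cloud: since $|A_n|$ is $\mathrm{Bin}(2^n,\bar m(n)^{-1})$ with mean $\gg\log n$, a Chernoff bound and Borel--Cantelli give $|A_n|=(1+o(1))2^n/\bar m(n)$, hence $\pi_n(A_n)=\bar m(n)^{-1}(1+o(1))$, almost surely, where $\pi_n$ is the uniform (stationary) law on $\vn$; a similar concentration step shows that, almost surely, for all starting vertices outside an exceptional set $\mathcal B_n$ with $\pi_n(\mathcal B_n)\to 0$ and for $\ell$ in the relevant range, the occupation probabilities $P_y(Y_n(\ell)\in A_n)$ lie within $(1+o(1))$ of $\bar m(n)^{-1}$, and that the maximal number of $A_n$-vertices among the neighbours of any single vertex is $o(n)$.

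Next I would fix an intermediate scale $\theta_n$ with $t_{\mathrm{mix}}(n)\ll\theta_n\ll\bar m(n)$, possible by the lower hypothesis, and prove the one-window estimate: for every ``typical'' $y$ (i.e. $y\notin\mathcal B_n$), almost surely
\[
P_y\big(H_n(A_n\setminus\{y\})\le\theta_n\big)=\big(1+o(1)\big)\,\frac{\theta_n}{\bar m(n)}.
\]
The upper bound is the first-moment (occupation-time) inequality $P_y(H_n(A_n\setminus\{y\})\le\theta_n)\le\sum_{\ell=1}^{\theta_n}P_y(Y_n(\ell)\in A_n)$, whose right-hand side is $\theta_n\bar m(n)^{-1}(1+o(1))$ because the terms with $\ell\gg t_{\mathrm{mix}}$ equal $\pi_n(A_n)(1+o(1))$ by mixing (uniformly in $y$, by vertex-transitivity of the hypercube) while the remaining $O(t_{\mathrm{mix}})$ terms sum to at most $O(t_{\mathrm{mix}}/\bar m(n))=o(\theta_n/\bar m(n))$ by the uniform-spread estimate. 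The matching lower bound is the second-moment method applied to the same occupation time: the square of its mean is $(\theta_n/\bar m(n))^2(1+o(1))$, and one checks its second moment is of the same order by splitting the pair sum $\sum_{\ell<\ell'}P_y(Y_n(\ell),Y_n(\ell')\in A_n)$ into far pairs with $\ell'-\ell\gg t_{\mathrm{mix}}$, which factor through mixing into $\pi_n(A_n)(1+o(1))\cdot P_y(Y_n(\ell)\in A_n)$ and sum to $(\theta_n/\bar m(n))^2(1+o(1))$, and near pairs with $\ell'-\ell\le t_{\mathrm{mix}}$, whose contribution is $O\big((\theta_n/\bar m(n))(t_{\mathrm{mix}}/\bar m(n))\big)=o\big((\theta_n/\bar m(n))^2\big)$ because from (most of) $A_n$ the walk returns to $A_n$ within the next $t_{\mathrm{mix}}$ steps only with probability $O(t_{\mathrm{mix}}/\bar m(n))$ --- this is where the sparsity of $A_n$ is used.

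Granting the one-window estimate I would conclude as follows. Partition $[0,a\bar m(n)]$ into $N_n=\lfloor a\bar m(n)/\theta_n\rfloor$ consecutive windows of length $\theta_n$; by the Markov property and the one-window estimate, the conditional probability of avoiding $A_n$ on each successive window, given the past, is $1-(1+o(1))\theta_n/\bar m(n)$ whenever that window is entered at a typical vertex, and since after the mixing already accumulated the walk sits at a typical vertex with probability $1-o(N_n^{-1})$, iterating gives
\[
P_y\big(H_n(A_n\setminus\{y\})>a\bar m(n)\big)=\Big(1-(1+o(1))\tfrac{\theta_n}{\bar m(n)}\Big)^{N_n}+o(1)\;\overset{n\to\infty}{\longrightarrow}\;e^{-a},
\]
because $\theta_n/\bar m(n)\to 0$ and $N_n\theta_n/\bar m(n)\to a$. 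For an arbitrary initial vertex $x$ (in particular $x\in A_n$, where the deletion $A_n\setminus\{x\}$ matters, or $x\in\mathcal B_n$) I would run a ``burn-in'' of length $K_n$ with $t_{\mathrm{mix}}(n)\ll K_n\ll\bar m(n)$: almost surely $\max_{x\in\vn}P_x(H_n(A_n\setminus\{x\})\le K_n)\to 0$ --- the bulk of this probability is $O(K_n/\bar m(n))\to 0$, and the first few steps are handled using that each vertex has $o(n)$ neighbours in $A_n$ --- after which the walk is at a typical vertex, and since $K_n\ll\bar m(n)$ the burn-in costs nothing on the scale $\bar m(n)$, so the previous display applies with a uniform $o(1)$. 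This yields $\max_{x\in\vn}\big|P_x(H_n(A_n\setminus\{x\})\ge a\bar m(n))-e^{-a}\big|\to 0$.

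I expect the main obstacle to be the one-window estimate, and within it the second-moment lower bound: one must control the pair-correlation sum $\sum_{\ell<\ell'}P_y(Y_n(\ell),Y_n(\ell')\in A_n)$ uniformly over typical starting vertices and for almost every realization of the percolation cloud, simultaneously ruling out short-range ``clumping'' of visits to $A_n$ via its sparsity and neutralizing long-range correlations via the mixing time. It is precisely this step --- together with the almost-sure spread/concentration input and the burn-in bound --- that forces both quantitative hypotheses $n\log n\ll\bar m(n)\ll 2^n(\log n)^{-1}$.
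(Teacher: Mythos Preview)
The paper does not prove this statement: it is quoted as Theorem~1 of \cite{CG08} and invoked as a black box in Section~\ref{sc4} to verify Condition~C for the REM dynamics. There is therefore no proof in the present paper against which to compare your proposal.

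That said, your block--mixing scheme is a standard and essentially correct route to exponential hitting laws for sparse i.i.d.\ targets on rapidly mixing chains, and you have correctly isolated the roles of the two hypotheses: $n\log n\ll\bar m(n)$ supplies both rapid mixing relative to the hitting scale and the sparsity needed for the second-moment computation, while $\bar m(n)\ll 2^n(\log n)^{-1}$ supplies enough points in $A_n$ for the almost-sure concentration of $|A_n|$ and of local occupation statistics. One point deserving care in your iteration step is the claim that at the start of each of the $N_n$ windows the walk sits at a ``typical'' vertex with probability $1-o(N_n^{-1})$: since $N_n=\bar m(n)/\theta_n$ may be large, this forces quantitative control both on $\pi_n(\mathcal B_n)$ and on the total-variation error after $\theta_n$ steps, so $\theta_n$ must be taken somewhat larger than $t_{\mathrm{mix}}(n)$ (say $\theta_n/t_{\mathrm{mix}}(n)\gg\log\bar m(n)$) and the exceptional set $\mathcal B_n$ must be shown to have stationary mass $o(\theta_n/\bar m(n))$ rather than merely $o(1)$. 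For the record, the original argument in \cite{CG08} proceeds instead via an explicit analysis tailored to the hypercube (projection to the Ehrenfest chain and precise transition-probability estimates) rather than through a generic mixing-time block decomposition; your approach is more robust and would transfer to other vertex-transitive graphs with comparable mixing behaviour, at the cost of the additional uniformity bookkeeping just mentioned.
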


In our case, Condition $C$ is equivalent to the result of the above theorem when $\bar{m}(n)=(\rho\rhon)^{-1}$, $\rho>0$. We have in (\ref{scales}) $\rhon^{-1}=\an\beta_n\sqrt{2\pi n}\exp(\alpha_n^2\beta_n^2 n/2)$ and, thus, (\ref{assum0}) and (\ref{assum}) yield $n\log n \ll (\rho\rhon)^{-1}\ll 2^n (\log n)^{-1}$. Hence, we can apply the theorem and verify Condition $C$.

\vspace{0.05in}

\noindent{\bf Condition B:} Recall the notation $\xi_n=T\rn$. It is trivial that for any $k_1,k_2$ with $k_1\leq k_2$, and for all $x,y\in\V_n$, $G^n_{k_1}(x,y)\leq G^n_{k_2}(x,y)$. Also, for any $k\in\N$ and $x\in\V_n$, $G^n_k(x,x)\geq 1$ by definition. Hence, using part (a) of Lemma 3.9 in \cite{BC08} we conclude that
\begin{equation}\label{cbass}
\limsup_{n\to\infty} G_{\xi_n\log \xi_n}^n(0,0)=1.
\end{equation}
Now we claim that uniformly for $x\in \tem$ that
\begin{equation}\label{cellop}
\lim_{n\to\infty}G_{\tem\setminus \{x\}}^n(x,x)=1,
\end{equation}
which is enough to check Condition B with $\fn=1$ and $\mathcal{K}_G=1$. Recalling that $H_n'(x)=\min\{i\geq 1: Y_n(i)=x\}$ we have
\begin{equation}
G_{\xi_n\log \xi_n}^n(0,0)=\big(P_x(H_n(\tem\setminus\{x\})<H_n'(x))\big)^{-1}.
\end{equation}
We have
\begin{equation}
P_x(H_n(\tem\setminus\{x\})\geq H_n'(x))\leq P_x(H_n'(x)\leq \xi_n\log \xi_n)+P_x(H_n(\tem\setminus \{x\})\geq \xi_n\log\xi_n).
\end{equation}
The first term on the right hand side of the above display is obviously independent of $x$ and converges to 0 by (\ref{cbass}); using Condition C, that is, the fact that uniformly for $x\in\tem$, $H_n(\tem\setminus\{x\})/\xi_n$ is asymptotically an exponential random variables, we can conclude that the second term also vanishes uniformly for $x\in\V_n$. Hence, we have proved (\ref{cellop}). 

\vspace{0.05in}

\noindent{\bf Condition D:} By Lemma 3.9 in \cite{BC08} we have for all $x\not= 0$, for all $n$ large enough
\begin{equation}\label{pcello}
G_{\xi_n}^n(0,x)\leq C/n
\end{equation}
for some positive constant $C>0$. By (\ref{assum}) we can choose a sequence $\lambda_n$ such that
\begin{equation}
\lambda_n\ll n,\;\;\;\lambda_n\ll \ann\beta_n^2 n
\end{equation}
and $\sum_n\exp(-\lambda_n)<\infty$ (recall that $\fn=1$). For such $\lambda_n$, using (\ref{pcello}) we have $\lambda_n G_{\xi_n(0,x)}\ll 1$ uniformly for all $x\in\V_n\setminus\{x\}$. Combining this with (\ref{cbass}) we get that for some positive constant $K$, for all $n$ large enough
\begin{equation}
\sum_{x\in \V_n}(e^{\lambda_n G_{\xi_n}^n(0,x)}-1)\leq e^{2\lambda_n}+\sum_{x\in \V_n\setminus\{x\}}(e^{\lambda_n G_{\xi_n}^n(0,x)}-1)\leq e^{2\lambda_n}+\lambda_n T\rn\leq K T\lambda_n\rn
\end{equation}
where in the last step we used that $\lambda_n\ll\ann\beta_n^2 n$. Hence, we have checked the first part of Condition D. The second part of Condition D is trivial since $\fn=1$.

\vspace{0.05in}

\noindent{\bf Condition 1:} Let $t'_n$ be a deterministic sequence of times satisfying (\ref{tnsqn}) and $A_n(\delta)$ be defined as (\ref{ansqn}). We check that Condition 1 is satisfies uniformly for $0<a<b$, conditioned on $m_n(q_n(j_n))=a^{1/\an}\tn$ and $m_n(q_n(j_n+1))=b^{1/\an}\tn$. For $\ep>0$ small enough so that $a+2\ep<b-2\ep$ we define the sequence of events:
\begin{equation}
 B_n:=\left\{\tau_{Y_n(i)}\leq (b-2\ep)^{1/\an} \tn:\;\forall i=k_n(j_n),\dots,k_n(j_n+1)-1\right\}.
\end{equation}
By Condition C and Proposition \ref{asympindp} we can choose $\ep$ small enough so that $\btau$ a.s. for $n$ large enough we have
\begin{equation}
\pp(B_n|\btau)\geq 1-\delta/4.
\end{equation}
By Proposition \ref{keyp}, $\btau$ a.s. for $n$ large enough
\begin{equation}
\pp\left(S_n(k_n(j_n+1))-S_n(k_n(j_n))\geq (b-\ep/2)^{1/\an}\tn-(a+\ep)^{1/\an}\tn\big|\btau\right)\geq 1-\delta/4.
\end{equation}
Note that $(b-\ep/2)^{1/\an}\tn-(a+\ep)^{1/\an}\tn \gg (b-2\ep)^{1/\an}\tn$ as $n\to\infty$. Also, by Proposition \ref{shallowt}, on $I_n$, the contribution from traps between $k_n(j_n)$ and $k_n(j_n+1)$ is smaller than $(b-\ep)^{1/\an}\tn$ with a probability larger than $1-\delta/4$. Hence, conditioned on $I_n$, $A_n(\delta)$ and the sequence of events inside the probability term in the last display, the only way $X_n(t'_n)\not= V_n(j_n)$ is if the random walk comes back to $V_n(j_n)$ before visiting $V_n(j_n+1)$. However, by (\ref{cbass}) this probability goes to 0. Thus, Condition 1 is satisfied.

\vspace{0.05in}

\noindent{\bf Condition 2:} By Condition C, the time for $Y_n$ to visit a deep trap divided by $\rn$ is approximately exponentially distributed. Hence, by (\ref{cbass}), we have that the probability of $Y_n$ revisits a deep trap converges to 0. Thus, Condition 2 follows.

\end{proof} 

\end{section}

\bibliographystyle{plain}
\bibliography{DynamicsSpinGlasses}
\end{document}